\documentclass[a4paper,reqno,11pt]{amsart}
\usepackage[utf8x]{inputenc}

\usepackage{amssymb,amsfonts}
\usepackage{amsmath,amsthm}
\usepackage[all,arc, cmtip]{xy}
\usepackage{enumerate}
\usepackage{paralist}
\usepackage{booktabs} % for much better looking tables
\usepackage{array} % for better arrays (eg matrices) in maths
\usepackage{mathrsfs}
\usepackage{url}
\usepackage{color}
\usepackage{cancel}
\usepackage{todonotes} % allows notes on work in progress
\usepackage{hyperref} % hyperlinks
\usepackage{mathtools}
\usepackage[mathscr]{euscript}
\usepackage[nameinlink,capitalise,noabbrev]{cleveref}
\usepackage[shortcuts]{extdash}

\usepackage{tikz-cd}

\def\ZZ{{\mathbb{Z}}}

\def\cF{{\mathcal F}}
\def\cG{{\mathcal G}}
\def\cK{{\mathcal K}}
\def\cM{{\mathcal M}}
\def\cN{{\mathcal N}}
\def\cO{{\mathcal O}}
\def\cP{{\mathcal P}}

\newcommand{\A}{\mathcal{A}}

\newcommand{\M}{\mathcal{M}}
\newcommand{\N}{\mathcal{N}}
\newcommand{\both}{\mathrlap{\top}\bot}

\newcommand{\co}{\mathrm{co}}
\newcommand{\simp}{\otimes_{\Delta}}

\DeclareMathOperator\Aut{Aut}

\theoremstyle{definition}
\newtheorem{definition}{Definition}[section]

\newtheorem{remark}[definition]{Remark}

\theoremstyle{plain}
\newtheorem{theorem}[definition]{Theorem}
\newtheorem{lemma}[definition]{Lemma}
\newtheorem{corollary}[definition]{Corollary}
\newtheorem{proposition}[definition]{Proposition}

\newtheorem*{theorem*}{Theorem}

\title{Exterior Power Operations on Relative \nolinebreak \texorpdfstring{$K$}{K}-Theory}

\author{Bernhard K\"{o}ck}
\address{School of Mathematical Sciences\\ University of Southampton\\  Southampton SO17 1BJ\\ United Kingdom}
\email{B.Koeck@soton.ac.uk}

\author{Jane Turner}
\address{School of Mathematical Sciences\\ University of Southampton\\  Southampton SO17 1BJ\\ United Kingdom}
\email{jat1g17@soton.ac.uk}

\date{\today}

\begin{document}

\begin{abstract}
We algebraically construct exterior power operations on higher \emph{relative} algebraic $K$-groups and prove their desired properties such as the expected behaviour with respect to (tensor) products and composition. This builds on Grayson's description of relative $K$-groups in terms of explicit generators and relations and on work by Harris, the first author and Taelman for (absolute) $K$-groups. Among the new features in our approach is the observation that the product axiom in the classical notion of a $\lambda$-ring is redundant. 
\end{abstract}

\maketitle

\section*{Introduction}

Arguably most powerful and fundamental among the features of higher algebraic $K$-theory are long exact sequences of $K$-groups. The most general type of these is the long exact sequence
\begin{equation}\label{eq: long exact sequence}
\ldots \rightarrow K_{n+1}\cM \rightarrow K_{n+1}\cN \rightarrow K_n[F] \rightarrow K_n \cM \rightarrow K_n \cN \rightarrow \ldots
\end{equation}
associated with an exact functor $F \colon \cM \rightarrow \cN$ between exact categories; here, the higher relative $K$-group $K_n[F]$ can be thought of as the $n$th homotopy group of the homotopy fibre of the induced map $KF \colon K\cM \rightarrow K\cN$ between the corresponding $K$-theory spaces. Building on his earlier work in \cite{Grayson2012} for (absolute) $K$-groups using so-called binary complexes, Grayson has in \cite{Grayson2016} given a conjectural description of $K_n[F]$ in terms of generators and relations and established the sequence (\ref{eq: long exact sequence}) purely algebraically. This conjectural description of $K_n[F]$  is confirmed in \cite{Tu24} to be equivalent to the definition given above, and we use it as the definition of $K_n[F]$ in this paper. 

Exterior power operations on $K$-groups provide another important tool in the (higher) $K$-theory of schemes. 
They are the core ingredients in the theory of Adams operations and, more generally, in Grothendieck's seminal Riemann-Roch theory. Their properties, such as their behaviour with respect to (tensor) products and composition, are captured in the classical, abstract notion of a $\lambda$-ring. 

The object of this paper is to prove the following theorem, see \cref{def: lambda-ring structure on higher relative K-groups} and \cref{thm: main theorem}, which combines the two features mentioned above. 

Let $f \colon X \rightarrow Y$ be a morphism between quasi-compact schemes and let $F \colon \cM \rightarrow \cN$ denote the associated pull-back functor between the exact categories $\cM := \cP(Y)$ and $\cN:= \cP(X)$ of locally free modules of finite rank over $\cO_Y$ and $\cO_X$, respectively.

\begin{theorem*} For every $n \ge 0$, the relative $K$-group $K_n[F]$ can be equipped with products and exterior power operations $\lambda^k \colon K_n[F] \rightarrow K_n[F]$, $k \ge 1$, such that $K_n[F]$ becomes a $\lambda$-ring (without unity) and such that the incoming and outgoing maps in the long exact sequence (\ref{eq: long exact sequence}) become $\lambda$-ring homomorphisms. Moreover, if $n \ge 1$, the products on $K_n[F]$ are trivial and $\lambda^k$ is a group homomorphism. 
\end{theorem*}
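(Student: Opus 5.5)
The plan is to work throughout with Grayson's algebraic description of $K_n[F]$ from \cite{Grayson2016}, confirmed in \cite{Tu24}. In that description $K_n[F]$ is generated by classes of $n$-dimensional binary multicomplexes in $\cM$ together with a relative datum: a binary multicomplex of dimension $n+1$ (or a contraction/acyclicity datum) in $\cN$ connecting $F$ of the $\cM$-part to zero. The relations are additivity in short exact sequences, vanishing of diagonal objects, and the corresponding relations on the $\cN$-side. I will model the operations on the absolute case of Harris, K\"ock and Taelman.

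\textbf{Step 1: define $\lambda^k$ on generators.} On a bounded acyclic binary multicomplex one applies the simplicial exterior power: take Dold--Kan, apply $\Lambda^k$ degreewise, transform back, and apply this in each of the $n$ directions. This sends acyclic binary multicomplexes to acyclic ones. Since $F$ is a pull-back, $F$ commutes with $\Lambda^k$ and with Dold--Kan up to canonical isomorphism, so the relative datum is carried along. This gives $\lambda^k$ on generators of $K_n[F]$.

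\textbf{Step 2: check the relations.} One must show that $\lambda^k$ respects additivity and diagonal vanishing. For $n\ge 1$ the key input is the Harris--K\"ock--Taelman argument. Filtering $\Lambda^k$ of an extension by the $\Lambda^i\otimes\Lambda^{k-i}$ pieces shows that the cross terms are classes of tensor products involving a diagonal (or doubly acyclic) binary complex. Such terms vanish, so $\lambda^k$ is additive, i.e. a group homomorphism. The same argument shows that products, defined by tensoring two generators, vanish for $n\ge 1$. This is where I expect the main obstacle: the filtration and vanishing arguments must now be carried out compatibly on the relative datum in $\cN$, including the extra $\cN$-dimension. For $n=0$, $K_0[F]$ is not a group-like truncation; there products are genuinely nontrivial and $\lambda_t$ is exponential-type rather than additive.

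\textbf{Step 3: compatibility with the long exact sequence.} The maps $K_{n+1}\cN\to K_n[F]$ and $K_n[F]\to K_n\cM$ are explicit on generators: the first inserts an $\cN$-complex with trivial $\cM$-part, and the second forgets the relative datum. Each commutes with the simplicial $\Lambda^k$ by construction, so both maps are $\lambda$-homomorphisms.

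\textbf{Step 4: the $\lambda$-ring axioms.} For $n\ge 1$, with trivial products and additive $\lambda^k$, the axioms reduce to identities for $\lambda^k(xy)$ and $\lambda^k\lambda^l(x)$. I would prove these using the observation from the abstract that the product axiom is redundant given the others; concretely, it follows from the composition axiom and the relation $\lambda^k(x)=(-1)^{k-1}\psi^k(x)/k$-type identities. So it remains to verify composition. This can be done by transporting, via the naturality of the simplicial operations and the compatibility already established, the universal polynomial identities $\Lambda^k\Lambda^l\cong$ filtered by Schur functors. These identities hold functorially in the relative setting because $F$ commutes with Schur functors. For $n=0$ one checks the axioms directly through the same Schur-functor filtrations.
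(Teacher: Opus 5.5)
Your construction has a genuine gap at $n=0$. This case is part of the statement, and the paper also reduces all compatibility claims for higher $n$ to it (applied to $(B^\mathrm{q})^nF$). A generator of $K_0[F]$ is a triple $X=(M,N,u)$, where $M=(\top M,\bot M)$ is a \emph{pair} of complexes in $\cM$ and $u\colon FM\to\both N$ is a pair of quasi-isomorphisms. The group $K_0[F]$ is the cokernel of $\Delta\colon K_0pC[F]\to K_0pB[F]$.

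In Step 1 you set $\lambda^k[X]:=[\Lambda^kX]$ and $[X][Y]:=[X\otimes_\Delta Y]$. Neither descends to this cokernel: the image of $\Delta$ is not an ideal (it contains $1$), and the non-additive $\Lambda^k$ does not respect it. Contrary to your Step 3, the outgoing map sends $[\Lambda^kX]$ to $[\Lambda^k\top M]-[\Lambda^k\bot M]$, which is not $\lambda^k([\top M]-[\bot M])$. A concrete example: for $f\colon\emptyset\to\mathrm{Spec}\,\ZZ$ we have $\cN=0$ and $K_0[F]\cong K_0\cM=\ZZ$. The triples with $(\top M,\bot M)=(0,\ZZ)$ and $(\ZZ,\ZZ^2)$ both represent $-1$. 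Your $\Lambda^2$ sends them to classes corresponding to $0$ and $-1$, whereas $\lambda^2(-1)=1$. Likewise your product gives $(-1)\cdot(-1)=-1$.

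The missing idea is the paper's: identify $K_0[F]$ with $\ker(\bot\colon K_0pB[F]\to K_0pC[F])$. This is a $\lambda$-ideal, since $\bot$ is a $\lambda$-ring map split by $\Delta$. Concretely, define $\lambda^k(x):=[\lambda^k(\tilde x-\Delta\bot\tilde x)]$ and $xy:=[(\tilde x-\Delta\bot\tilde x)(\tilde y-\Delta\bot\tilde y)]$.
\begin{itemize}
\item Compatibility with the outgoing map then follows from Grayson's formula for $\lambda^k$ of a difference.
\item Compatibility with the incoming map $K_1\cN\to K_0[F]$ is no longer automatic. It needs the vanishing of products in $K_1\cN$ \cite[Proposition~5.11]{HKT17} to kill the correction terms.
\item For $n\ge1$, your definition differs from the paper's only by products and by terms in the image of $\Delta$. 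So the two agree once one knows that products vanish on $K_n$ of the exact category $B[F]$. That is precisely the ``main obstacle'' you leave open. The paper settles it by viewing each $G[G_1\ldots G_nF]$ as $G_1\ldots G_n(G[F])$, reordering the iterated kernels, and applying \cite[Proposition~5.11]{HKT17} to $B[F]$.
\end{itemize}

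Step 4 needs repair as well. The relation $\lambda^k=(-1)^{k-1}\psi^k/k$ holds only when products vanish, and it involves division by $k$, so it cannot yield the product axiom integrally. For $n\ge1$ that axiom is trivial anyway, since every monomial of $P_k$ contains both an $X_i$ and a $Y_j$. For $n=0$ it is not trivial. The paper proves its redundancy by expanding $\lambda^k\lambda^2(x+y)$ in two ways. This produces recursively defined polynomials $Q_k$ with $\lambda^k(xy)=Q_k(\lambda^1x,\ldots;\lambda^1y,\ldots)$. The paper then shows $Q_k=P_k$ by evaluating in the free $\lambda$-ring on two generators.

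For the composition axiom, you do not specify which Schur-functor filtrations of $\Lambda^k\circ\Lambda^l$ you mean, and the paper avoids explicit filtrations altogether. It takes the identity $[\Lambda^k\circ\Lambda^l]=P_{k,l}([\Lambda^1],\ldots,[\Lambda^{kl}])$ in $K_0\mathrm{Pol}^0_{<\infty}(\ZZ)$ from \cite[Corollary~8.6]{HKT17}. It then transports this identity along an exact functor $\mathrm{Pol}^0_{<\infty}(\ZZ)\to\mathrm{End}(B[(B^\mathrm{q})^nF])$ compatible with composition, tensor products and exterior powers; this uses that $NH\Gamma$ preserves quasi-isomorphisms. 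This proves the axioms in the $\lambda$-ring $K_0B[(B^\mathrm{q})^nF]$. They pass to $K_n[F]$ because $K_n[F]$ is identified with a $\lambda$-ideal there (an iterated kernel of maps $\bot$), not with a quotient by the image of $\Delta$. This is once more the point missing from your generator-level approach.
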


This theorem will be used in a forthcoming paper to establish and prove an Adams-Riemann-Roch Theorem and a Grothen\-dieck-Riemann-Roch theorem for higher relative $K$-theory of schemes. These new theorems should be considered as refinements of more classical Riemann-Roch theorems such as the main results of \cite{Sou85}. To give more context, a similar refinement is provided by the Equivariant Tamagawa Number Conjecture for Tate motives which lifts classical results in Galois-Module Theory in Algebraic Number Theory from the projective class group to a relative $K_0$-group, see \cite{BF01} for more details.

To be able to explain the main ideas behind our theorem, we first consider the case $n=0$ and now roughly describe the generators of $K_0[F]$, see \cref{def: triples and relative K_0} for more (precise) details. They are triples of the form $X = \left(\begin{psmallmatrix} A \\ B \end{psmallmatrix}, \left(N, \begin{smallmatrix} d \\ d' \end{smallmatrix}\right), \begin{psmallmatrix} u \\ v \end{psmallmatrix}\right)$ where $A$ and~$B$ are chain complexes in $\cM$, $(N,d)$ and $(N,d')$ are chain complexes in $\cN$ (i.e., a so-called binary complex in~$\cN$) and $u\colon FA \rightarrow (N,d)$ and $v \colon FB \rightarrow (N,d')$ are quasi-isomorphisms. Using the simplicial constructions from \cite{HKT17} for (binary) complexes, we define tensor products of such triples, see \cref{def: simplicial tensor products of triples}, and exterior powers $\Lambda^k(X)$ for $k\ge 1$, see \cref{def: power operations for triples}. In order to then obtain an operation $\lambda^k \colon K_0[F] \rightarrow K_0[F]$ which is compatible with the classical operation $\lambda^k \colon K_0\cM \rightarrow K_0\cM$ via the outgoing homomorphism $K_0[F] \rightarrow K_0\cM$ in~(\ref{eq: long exact sequence}), the class $\lambda^k([X]) \in K_0[F]$ can however not be simply defined as $[\Lambda^k(X)]$, but we define it in the following a priori surprising way, see \cref{def: exterior powers on relative K_0}:
\[\lambda^k([X]) := \left[\lambda^k \left(X-\Delta\bot X\right)\right];\]
here, $\Delta\bot X$ denotes the triple $\left(\begin{psmallmatrix} B \\ B \end{psmallmatrix}, \left(N, \begin{smallmatrix} d' \\ d' \end{smallmatrix}\right), \begin{psmallmatrix} v \\ v \end{psmallmatrix}\right)$, the difference $X - \Delta \bot X$ is to be taken in the Grothendieck group $K_0B[F]$ of triples as above, the right-hand~$\lambda^k$ is computed there as well and  
$[\ldots]$ means taking the class in the factor group $K_0[F]$. A similar trick yields the definition of products on $K_0[F]$, see \cref{def: products on K_0[F]}. This way, the outgoing map $K_0[F] \rightarrow K_0\cM$ can be verified to be a $\lambda$-ring homomorphism, see \cref{prop: compatibility of products} and \cref{prop: compatibility of power operations with exact sequence}. The proof of the statement that also the incoming map $K_1\cN \rightarrow K_0[F]$ in (\ref{eq: long exact sequence}) is a $\lambda$-ring homomorphism relies on the fact (proved in \cite{HKT17}) that products in $K_1 \cN$ vanish, again see \cref{prop: compatibility of products} and \cref{prop: compatibility of power operations with exact sequence}. 

The statement that $K_0[F]$ together with the products and operations $\lambda^k$, $k\ge 1$, discussed above satisfies the properties of a $\lambda$-ring already holds on the level of $K_0B[F]$ and is proved there, see \cref{thm: K_0B[F] is a lambda ring}. More precisely, in order to show the expected behaviour of $\lambda^k$ with respect to composition, we use the approach from \cite{HKT17} which relies on the crucial result that the Grothendieck ring of the category $\mathrm{Pol}^0_{< \infty}(\ZZ)$ of polynomial functors over $\ZZ$ is isomorphic to the free $\lambda$-ring in one variable. Then the proof comes down to constructing a natural functor 
\[\mathrm{Pol}^0_{< \infty}(\ZZ) \rightarrow \mathrm{End}(B[F]).\]
Various ideas suggest themselves to be used to prove the expected behaviour of $\lambda^k$ with respect to products in our context, see the paragraph before \cref{prop: omitting product axiom}. However, we just prove the following surprising and comparatively elementary fact which seems to have not yet been observed in the literature and which will probably be convenient not just for this paper: the product axiom in the definition of a $\lambda$-ring is redundant, see \cref{prop: omitting product axiom}. 

The case when $n\ge 1$ will be treated in \cref{sec: higher relative K-groups}. This case is combinatorially more involved. The essential additional ingredient in the construction of $\lambda^k$ and in the proof of our main theorem is however just the recursive nature of the combinatorial definition of $K_n[F]$, see \cref{def: higher relative K-groups}, together with its subtle feature that the recursion can be done in any order, see \cref{def: lambda-ring structure on higher relative K-groups} and the proof of \cref{thm: main theorem}. For example, the statements that products on $K_n[F]$ are trivial and that $\lambda^k \colon K_n[F] \rightarrow K_n[F]$ is a homomorphism follow from the analogous statements for absolute $K$-groups proved in \cite{HKT17}. 

We finally recall that, in \cite{Bas68}, Bass has also purely algebraically defined a $K_1$-group and a relative $K_0$-group, which we denote by $K_1^\mathrm{B}\cN$ and $K_0^\mathrm{B}[F]$, respectively, and has established the corresponding exact sequence (\ref{eq: long exact sequence}) for $n=0$ when $\cM$ and $\cN$ are split exact and the functor $F$ is cofinal. In \cref{thm:BassGraysonIso}, we provide an explicit and well-defined homomorphism 
\[\Phi \colon K_0^\mathrm{B}[F] \rightarrow K_0[F]\] 
and prove, under the same assumptions, that $\Phi$ is an isomorphism. Furthermore, in various remarks (see \ref{rem: products on Bass's K_1}, \ref{rem: products on Bass's relative K_0}, \ref{rem: power operations on Bass K_1} and \ref{rem: power operations on Bass relative K_1}), we recall/give constructions of products and exterior power operations on $K_1^\mathrm{B}\cN$ and $K_0^\mathrm{B}[F]$ and relate them to the constructions described above.

\section{Combinatorial Relative \texorpdfstring{$K_0$}{K0}}\label{Sec: 1}

In the first part of this section we recall Grayson's combinatorial definitions of the $K_1$-group of an exact category \cite{Grayson2012} and of the relative $K_0$-group of an exact functor between exact categories \cite{Grayson2016}. We then show that Grayson's relative $K_0$-group is isomorphic to Bass's relative $K_0$-group assuming the usual assumptions for Bass $K$-theory are satisfied. We finish by proving the expected property that swapping top and bottom in the components of the generating triples of Grayson's relative $K_0$-group amounts to a sign change. 

Let $\cM$ be an exact category. We assume throughout that all exact categories support long exact sequences in the sense of \cite[Definition~1.4]{Grayson2012}. This ensures we have a well-behaved notion of quasi\-/isomorphisms for complexes, see \cite[Definition~2.6]{Grayson2012} and the discussion after \cite[Definition~1.4]{Grayson2012}. For example, by \cite[A.9.2]{ThomTro}, every idempotent complete category supports long exact sequences.

Slightly deviating from standard notation, let $C\cM$ denote the category of bounded chain complexes in $\cM$ supported in non-negative degrees. (The latter condition could be dropped in this first section but will be essential in the later sections. Anyway, this condition does not affect the associated $K$-theory as shown in \cite[Proposition~A.1]{KZ25}.) Let $C^\mathrm{q} \cM$ denote the full subcategory of $C\cM$ consisting of acyclic complexes. Both $C\cM$ and $C^\mathrm{q}\cM$ are exact categories in the obvious way, see \cite[Section~2]{Grayson2012}.

We recall from \cite[Section 3]{Grayson2012} that a {\em binary complex} $M=\left(M., \substack{d \\ \tilde{d}}\right)$ in~$\cM$ is a graded object~$M.$ in $\cM$ together with two degree~$-1$ maps $d, \tilde{d}\colon M. \rightarrow M.$ such that both $d^2=0$ and $\tilde{d}^2=0$. We will write $\top M$ and $\bot M$ for the complexes $(M., d)$ and $(M.,\tilde{d})$, respectively, and $\both M$ for the pair $(\top M,\bot M)$.  If $d=\tilde{d}$,  the binary complex~$M$ is said to be {\em diagonal}. Given a complex $M= (M., d)$ in $C\cM$, we write $\Delta M$ for the diagonal binary complex $\left(M., \substack{ d \\d}\right)$. A {\em morphism between binary complexes} is a degree~0 map between the underlying graded objects that is a chain map with respect to both differentials. Similarly to above, let $B\cM$ denote the category of bounded binary chain complexes in $\cM$ supported in non-negative degrees. The obvious definition of short exact sequences turns $B\cM$ into an exact category. The exact full subcategory consisting of binary complexes~$M$ in~$B\cM$ such that both $\top M$ and $\bot M$ belong to $C^\mathrm{q}\cM$  will be denoted $B^\mathrm{q}\cM$.

When applied to $n=1$, Corollary~7.4 in \cite{Grayson2012}, the main outcome of \cite{Grayson2012}, shows that we have a canonical isomorphism
\begin{equation}
K_1 \cM \cong \mathrm{coker}(\Delta \colon K_0(C^\mathrm{q} \cM) \rightarrow K_0(B^\mathrm{q} \cM)).
\end{equation}
Throughout this paper, we will take this as the definition of $K_1 \cM$.

\medskip

Let $F: \M \to \N$ be an exact functor between exact categories. We recall the following crucial definitions from \cite[Section~1]{Grayson2016}.

\pagebreak[3]

\begin{definition}\label{def: triples and relative K_0}\mbox{}
\begin{enumerate}[(a)]
\item Let $B[F]$ denote the exact category whose objects are triples of the form $(M, N, u)$, where $M$ is an object in $C \M^2$ (i.e., a pair of objects in $C \M$), $N$ is an object in $B \N$ and $u: FM \to \both N$ is a pair of quasi-isomorphisms of chain complexes in~$\N$. A morphism \[ (M, N, u) \to (M', N', u') \] in~$B[F]$ is a pair of morphisms $\phi : M \to M'$ in $C\cM^2$ and $\psi : N \to N'$ in~$C\cN^2$ such that the following diagram commutes:
    \begin{equation*}
        \begin{tikzcd}
            FM \arrow[r, "F \phi"] \arrow[d, "u"]
            & FM' \arrow[d, "u'"]\\
            \both N \arrow[r, "\both \psi"]
            & \both N'.
        \end{tikzcd}
    \end{equation*}
    The subcategory $pB[F]$ is the category containing every object of $B[F]$, and whose morphisms $(\phi, \psi)$ are such that $\phi$ is a quasi-isomorphism and $\psi$ is an isomorphism.
\item Using triples $(M,N,u)$ where both $M$ and $N$ are {\it ordinary} complexes, more precisely where $M \in C\cM$,  $N \in C\cN$ and $u \colon FM \rightarrow N$ is a quasi-isomorphism, we similarly define the categories $C[F] \supset pC[F]$.
\item We extend the functors $\top, \bot$ and $\Delta$ above to triples as in (a) and~(b) by applying them componentwise (where $\top M$ and $\bot M$ for $M \in C\cM^2$ are defined similarly as for $M \in B\cM$). We also write $\tau$ for the endofunctor of $B[F]$ which swaps $\top X$ and $\bot X$ for $X \in B[F]$. 
\item The {\it relative $K_0$-group of $F$} is
\begin{equation}
K_0[F] := \mathrm{coker}(\Delta \colon K_0pC[F] \rightarrow K_0pB[F])
\end{equation}
where, by abuse of notation, $K_0pC[F]$ (and similarly $K_0pB[F]$) is defined by dividing out the equivalence relation $\sim$ on $K_0C[F]$ which is generated by $[(M,N,u)] \sim [(M',N',u')]$ whenever there is a morphism $[(M,N,u)] \rightarrow [(M',N',u')]$ in $pC[F]$.
\end{enumerate}
\end{definition}

\begin{remark}\label{rem: Relative K group justification}
The following two facts justify calling $K_0[F]$ the relative $K_0$-group.
\begin{enumerate}[(a)]
\item By \cite[Corollary~1.9]{Grayson2016}, there is an exact sequence
    \begin{equation} \label{equ: long exact sequence}
        \begin{tikzcd}
            K_1 \M \arrow[r]
            & K_1 \N \arrow[r]
            & K_0[F] \arrow[r]
            & K_0 \M \arrow[r]
            & K_0 \N
        \end{tikzcd}
    \end{equation}
where $K_1 \cM \rightarrow K_1\cN$ and $K_0 \cM \rightarrow K_0\cN$ are induced by $F$, the homomorphism $K_1 \N \to K_0[F]$ is induced by the fully faithful inclusion functor
    \[ B^\mathrm{q} \N \hookrightarrow B[F], \quad N \mapsto (0, N, 0), \]
    and $K_0[F] \to K_0 \M$ is given by
    \[ [(M, N, u )]\mapsto [\top M] - [\bot M] \]
after identifying $K_0\cM$ with $K_0qC\cM$, the Grothendieck group of $C\cM$ modulo the relations given by quasi-isomorphisms.
\item By \cite[Theorem~2.1]{Tu24}, there is a canonical isomorphism
\[K_0[F] \cong \pi_0\mathrm{hofib}(KF \colon K\cM \rightarrow K\cN)\]
between $K_0[F]$ and $\pi_0$ of the homotopy fibre of the induced map $KF$ between the $K$-theory spaces $K\cM$ and  $K\cN$ of $\cM$ and $\cN$.
\end{enumerate}
\end{remark}

As a further justification for calling $K_0[F]$ the relative $K_0$-group, we will show over the next pages that $K_0[F]$ is isomorphic to Bass's relative $K_0$-group $K_0^\mathrm{B}[F]$ in the following standard situation for Bass $K$-theory: $\cM$ and $\cN$ are split exact categories (i.e., all short exact sequences in $\cM$ and $\cN$ split) and $F \colon \cM \to \cN$ is a cofinal exact functor, i.e, for every object $N$ in~$\N$, there are objects $N'$ in~$\N$ and $M$ in $\M$ such that $N \oplus N' \cong F M$.

We start by recalling the definition of Bass's $K_1$-group $K_1^\mathrm{B}\cM$ from \cite[Chapter~VII, \S 1]{Bas68}. Let $\Aut(\M)$ denote the exact category of pairs $(P, \alpha)$, where $P$ is an object of $\M$ and $\alpha \colon P \to P$ is an automorphism. A morphism $(P, \alpha) \to (P', \alpha')$ in $\Aut(\M)$ is a morphism $\phi: P \to P'$ such that $\phi \circ \alpha = \alpha' \circ \phi$.  {\it Bass's $K_1$-group} $K_1^{\mathrm{B}}(\M)$ is then defined as the quotient of the Grothendieck group $K_0 \Aut(\M)$ by the subgroup generated by elements of the form
    \[ [P, \beta \circ \alpha] - [P, \beta] - [P, \alpha] \]
whenever there is an object $P$ of $\M$ with automorphisms $\alpha, \beta: P \to P$. Let $(P,\alpha, \mathrm{id})$ denote the binary complex concentrated in degrees 0 and 1 whose object in degrees 0 and 1 is $P$, whose top bottom differential is $\alpha$ and whose bottom differential is $\mathrm{id}_P$; let's call such a binary complex a {\it binary automorphism}. It is shown in \cite[Theorem~2.24]{HarPhD} that mapping an object $(P, \alpha) \in \Aut(\cM)$ to the binary automorphism $(P,\alpha, \mathrm{id})$ induces a well-defined isomorphism
\begin{equation}\label{equ: iso between Bass and Grayson K1}
K_1^\mathrm{B} \cM \xlongrightarrow{\sim} K_1 \cM
\end{equation}
if $\cM$ is split exact. 

Next, we recall the definition of Bass's relative $K_0$-group (see  \cite[Chapter~VII, \S 5]{Bas68}). Let $\co(F)$ denote the exact category  whose objects are triples of the form $(P, \alpha, Q)$, where $P, Q \in \M$ and $\alpha: FP \to FQ$ is an isomorphism in~$\N$. Furthermore, a morphism $(P, \alpha, Q) \to (P', \alpha', Q')$ consists of two morphisms $f\colon P \to P'$ and $ g\colon Q \to Q'$ in $\M$ such that $Fg \circ \alpha = \alpha' \circ Ff$. {\it Bass's relative $K_0$-group}~$K^\mathrm{B}_0[F]$ is then defined as the quotient of the Grothendieck group $K_0\mathrm{co}(F)$ by the subgroup generated by elements of the form
    \[ [P, \beta \alpha, R] - [P, \alpha, Q] - [Q, \beta, R], \]
whenever there are objects $P, Q, R$ in $\M$ with isomorphisms $\alpha: FP \to FQ$ and $\beta: FQ \to FR$.

We define the functor
\begin{equation}\label{def: functor Psi}
\Psi \colon \co(F) \to B[F], \quad (P, \alpha, Q) \mapsto \big( \big( \substack{P \\ Q} \big), FQ, \big( \substack{\alpha \\ 1} \big) \big), 
\end{equation}
where $(\substack{P \\ Q})$ is considered as a pair of complexes in $\M$ supported in degree $0$, and $FQ$ is considered as a binary complex in $\N$ supported in degree $0$; the functor $\Psi$ acts on morphisms in the obvious way.

\begin{theorem}\label{thm:BassGraysonIso}
The functor $\Psi\colon \co(F) \to B[F]$ induces a well-defined homomorphism
\[\Phi \colon K_0^{\mathrm{B}}[F] \rightarrow K_0[F].\]
If $\cM$ and $\cN$ are split exact and $F$ is cofinal, then $\Phi$ is an isomorphism. 
\end{theorem}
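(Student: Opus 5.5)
First I check that $\Phi$ is well defined. The functor $\Psi$ is exact, because short exact sequences in $\co(F)$ go to componentwise short exact sequences of triples. Hence $\Psi$ induces a homomorphism $K_0\co(F) \to K_0 B[F]$, which I compose with the projection to $K_0pB[F]$ and then to $K_0[F]$. It remains to show that the Bass relation $[P,\beta\alpha,R] = [P,\alpha,Q] + [Q,\beta,R]$ holds in $K_0[F]$. The plan is to relate both sides through morphisms in $pB[F]$ and diagonal triples. The triple $\Psi(P,\beta\alpha,R)$ maps in $pB[F]$ to $\bigl(\bigl(\substack{P\\ R}\bigr), FR, \bigl(\substack{\beta\alpha\\ 1}\bigr)\bigr)$, and this is also isomorphic to $\bigl(\bigl(\substack{P\\ R}\bigr), FQ, \bigl(\substack{\alpha\\ \beta^{-1}}\bigr)\bigr)$ via $\psi=\beta^{-1}$. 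In the same spirit, $\Psi(Q,\beta,R)\cong \bigl(\bigl(\substack{Q\\ R}\bigr),FQ,\bigl(\substack{1\\ \beta^{-1}}\bigr)\bigr)$. Summing in $K_0 B[F]$ and rearranging components, the difference
\[
\Psi(P,\alpha,Q)+\Psi(Q,\beta,R)-\Psi(P,\beta\alpha,R)
\]
becomes the class of a triple whose top and bottom entries agree up to a swap. I will realise this through degree-$0$ direct sums: the sum $\bigl(\bigl(\substack{P\oplus Q\\ Q\oplus R}\bigr), FQ\oplus FQ, \ldots\bigr)$ compared with $\bigl(\bigl(\substack{P\oplus Q\\ R\oplus Q}\bigr),\ldots\bigr)$. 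The remaining discrepancy is a diagonal triple $\Delta(Q,FQ,1)$, which vanishes in $K_0[F]$. A cleaner route may exist using the relation $[X]=[X']+[\Delta\bot\ldots]$. In any case this step is routine bookkeeping with isomorphisms in $pB[F]$.

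For bijectivity, assume $\cM$ and $\cN$ are split exact and $F$ is cofinal. I use the five lemma on the diagram comparing Bass's exact sequence
\[
K_1^\mathrm{B}\cM\to K_1^\mathrm{B}\cN\to K_0^\mathrm{B}[F]\to K_0\cM\to K_0\cN
\]
from \cite[VII]{Bas68} with Grayson's sequence (\ref{equ: long exact sequence}). The outer vertical maps are the isomorphism (\ref{equ: iso between Bass and Grayson K1}) and the identities on $K_0$. I then check that the two squares adjacent to $\Phi$ commute.

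\begin{itemize}
\item On the right, $[P,\alpha,Q]\mapsto[P]-[Q]$ in Bass's sequence. Grayson's map sends $\Psi(P,\alpha,Q)$ to $[\top M]-[\bot M]=[P]-[Q]$, so this square commutes.
\item On the left, Bass's boundary sends $(N,\gamma)$ to $[M,\ldots,M]$, using cofinality to write $N\oplus N'\cong FM$ and taking the automorphism $\gamma\oplus 1$. Going the other way, $(N,\gamma)$ maps to the binary automorphism, and then to $(0,(N,\gamma,1),0)\in B[F]$.
\end{itemize}

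The main obstacle is showing that this last triple equals $\Psi(M,\gamma\oplus1,M)$ in $K_0[F]$, possibly up to the sign convention. I would try an explicit zigzag: consider the triple $\bigl(\bigl(\substack{M\\ M}\bigr), \mathrm{Cone}\text{-type binary complex}, u\bigr)$ whose binary part is $FM \rightrightarrows FM$ in degrees $1,0$ with differentials $\gamma\oplus1$ and $1$, placed together with $FM$ in degree $0$. Short exact sequences in $B[F]$ then express this triple both as $\Psi(M,\gamma\oplus 1,M)$ plus a diagonal triple and as $(0,(FM,\gamma\oplus1,1),0)$ plus a diagonal triple. After that I use that $(FM,\gamma\oplus 1,1)$ and $(N,\gamma,1)$ agree in $K_1\cN$, since they differ by $(N',1,1)$, which is diagonal.

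If the sign conventions come out with the wrong orientation, I will compose with $\tau$ and use the swap-equals-sign-change fact announced for the end of the section. With both squares commuting up to sign, the five lemma yields that $\Phi$ is an isomorphism.
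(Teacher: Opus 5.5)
Your bijectivity half is essentially the paper's argument: five lemma, the right square by definition, and the left square via a triple in degrees $1,0$. Its naive filtration has $\Psi(M,\gamma\oplus1,M)$ as subobject and a diagonal quotient, and it is related to $(0,(FM,\gamma\oplus1,1),0)$ by a $pB[F]$-morphism or a short exact sequence. Its $\cM$-part must be $M\xrightarrow{1}M$ in degrees $1,0$, since $FM$ in degree $0$ admits no quasi-isomorphism to the acyclic complexes $FM\rightrightarrows FM$.

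The genuine gap is in well-definedness, which is not ``routine bookkeeping with isomorphisms in $pB[F]$''. After your rewriting you must show
\[
\bigl[\bigl(\bigl(\substack{P\oplus Q\\ Q\oplus R}\bigr), FQ\oplus FQ, \bigl(\substack{\alpha\oplus 1\\ 1\oplus\beta^{-1}}\bigr)\bigr)\bigr]
=\bigl[\bigl(\bigl(\substack{P\oplus Q\\ R\oplus Q}\bigr), FQ\oplus FQ, \bigl(\substack{\alpha\oplus 1\\ \beta^{-1}\oplus 1}\bigr)\bigr)\bigr].
\]
A morphism in $B[F]$ uses one and the same $\psi$ on the binary complex for both structure maps, so you cannot swap $Q$ and $R$ in the bottom alone. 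Doing so forces $\psi$ to be the swap $s$ of $FQ\oplus FQ$, and the top would then need $F\phi_1=(\alpha^{-1}\oplus1)\,s\,(\alpha\oplus1)$, which does not come from $\cM$.

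Two degree-$0$ triples are isomorphic in $B[F]$ exactly when the associated objects of $\co(F)$ are isomorphic. Take $\cP(\ZZ)\to\cP(\mathbb{Q})$, $P=Q=R=\ZZ$, $\alpha=\beta=2$. The two sides then correspond to $\alpha\oplus\beta=\mathrm{diag}(2,2)$ and $\beta\alpha\oplus1=\mathrm{diag}(4,1)$, which lie in different $\mathrm{GL}_2(\ZZ)$-double cosets, so no isomorphism exists.

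A triple whose top and bottom agree only up to a permutation $\sigma$ of summands, i.e.\ of the form $(\Delta A,N,(w,\sigma w))$, is not diagonal. Its class is the image of a $K_1\cN$-class that is nonzero in general; a transposition gives $[-1]\in K_1\mathbb{Q}$. The relation you need is the relative analogue of $[\beta\alpha]=[\alpha]+[\beta]$ in $K_1$. It requires either complexes of length $\ge1$ or input from $K_1$.

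There are two ways to close the gap.

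\emph{The paper's route.} By \cref{lem:BassShuffle}, $[\Psi(Q,\beta,R)]$ is the class of a triple in degrees $1,0$ with binary part $FQ\rightrightarrows FR$ (differentials $\beta$ and $0$), so $\beta$ becomes a differential. This triple maps by a $pB[F]$-morphism to $\bigl((P\xrightarrow{1}P,\ Q\xrightarrow{0}R), FQ\rightrightarrows FR, ((\alpha;\beta\alpha),(1;1))\bigr)$. The naive filtration of the latter gives $[\Psi(Q,\beta,R)]=[\Psi(P,\beta\alpha,R)]+[\Psi(P,\alpha,Q)[1]]$, and \cite[Lemma~6.1]{Grayson2016} finishes.

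\emph{Making your swap idea rigorous.} Use \cref{prop: sign change for swapping}, whose proof does not use \cref{thm:BassGraysonIso}. It writes $[\Psi(P,\alpha,Q)]+[\Psi(Q,\beta,R)]-[\Psi(P,\beta\alpha,R)]$ as the class of $(\Delta(P\oplus Q\oplus R), F(Q^{\oplus3}), (w,\sigma w))$. Here $w=\alpha\oplus1\oplus\beta^{-1}$ and $\sigma=F(\sigma_0)$ for the cyclic permutation $\sigma_0$ of $Q^{\oplus3}$. The cone and binary-ladder argument from that proposition's proof identifies this class with the image of $\pm[(F(Q^{\oplus3}),\sigma,\mathrm{id})]\in K_1\cN$. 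That element comes from $K_1\cM$, so it vanishes in $K_0[F]$.
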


In the following computations we will be writing out objects of $B[F]$ supported in degrees $0$ and $1$ as follows:
\[ \big( ( A_1 \xlongrightarrow{\alpha}  A_0,  B_1 \xrightarrow{\beta} B_0  ), \big( \begin{tikzcd} N_1 \arrow[r, shift left, "d"] \arrow[r, shift right, "d'"'] & N_0 \end{tikzcd} \big), ( (u_1; u_0), (v_1; v_0) ) \big) \]
where we write the quasi-isomorphisms in an order that agrees with the ordering of the objects in the complexes.

Before we begin with the proof of \cref{thm:BassGraysonIso} we prove the following useful lemma.

\begin{lemma}\label{lem:BassShuffle}
    Given $(P, \alpha, Q) \in \co(F)$, we have the equality
    \[ [\Psi(P, \alpha, Q)] = \big[ \big( 0 \xlongrightarrow{0} 0, P \xlongrightarrow{0} Q\big), 
     \big(\begin{tikzcd} FP \arrow[r, "\alpha", shift left] \arrow[r, "0"', shift right] & FQ \end{tikzcd} \big), \big( (0;0), (1;1) \big) \big] \]
    in $K_0[F]$.
\end{lemma}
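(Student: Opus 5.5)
The plan is to connect $\Psi(P,\alpha,Q)$ and the triple on the right-hand side, which I call $T$, through one auxiliary object $Z$ of $B[F]$. I will use only the two kinds of relations available in $K_0[F]$. First, additivity along short exact sequences in $B[F]$, where the classes of diagonal triples $\Delta(\ldots)$ vanish. Second, the identification of classes along morphisms in $pB[F]$, i.e.\ morphisms $(\phi,\psi)$ with $\phi$ a quasi-isomorphism and $\psi$ an isomorphism. I will show $[\Psi(P,\alpha,Q)]=[Z]$ via a short exact sequence whose quotient is diagonal, and $[Z]=[T]$ via a morphism in $pB[F]$.

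\emph{Construction of $Z$.} In the notation introduced before the lemma, set
\[ Z := \big( \big( P \xlongrightarrow{1} P,\; P \xlongrightarrow{0} Q\big),\; \big(\begin{tikzcd} FP \arrow[r, "\alpha", shift left] \arrow[r, "0"', shift right] & FQ \end{tikzcd}\big),\; \big( (1;\alpha), (1;1) \big)\big). \]
That is, the top complex in $\cM$ is the cone of $\mathrm{id}_P$ and the bottom complex is $P[1]\oplus Q$ with zero differential. The binary complex $N$ is the same as in $T$. Two things must be checked for $Z$ to lie in $B[F]$:
\begin{itemize}
\item $(1;\alpha)$ is a chain map $F(P\xrightarrow{1}P)\to (FP\xrightarrow{\alpha}FQ)$, since $\alpha\circ 1=\alpha\circ 1$. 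It is a quasi-isomorphism because it is an isomorphism in each degree ($\alpha$ is an isomorphism).
\item $(1;1)$ is the identity on the bottom complexes, hence also a quasi-isomorphism.
\end{itemize}

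\emph{Short exact sequence.} The triple $\Psi(P,\alpha,Q)$ embeds into $Z$ as the degree-$0$ part, with the inclusions $P\hookrightarrow P$ (top), $Q\hookrightarrow Q$ (bottom) and $FQ\hookrightarrow FQ$ in $N$. This is a subobject in $B[F]$ for the following reasons:
\begin{itemize}
\item $P$ in degree $0$ is a subcomplex of the cone of $\mathrm{id}_P$.
\item The degree-$0$ part of $N$ is a binary subcomplex.
\item The structure maps restrict to $\alpha$ and $1$, which is exactly $\Psi(P,\alpha,Q)$.
\end{itemize}
The quotient is the triple $\big((P[1],P[1]), \Delta(FP[1]), (1,1)\big)$, which is $\Delta$ of the triple $(P[1],FP[1],1)\in C[F]$. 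Since the sequence is degreewise split, it is exact in $B[F]$. Hence $[Z]=[\Psi(P,\alpha,Q)]+[\Delta(\ldots)]=[\Psi(P,\alpha,Q)]$ in $K_0[F]$.

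\emph{Morphism in $pB[F]$.} Define $Z\to T$ as follows:
\begin{itemize}
\item on the top $\cM$-component, the zero map from the cone of $\mathrm{id}_P$ to $0$; this is a quasi-isomorphism since the cone is acyclic;
\item on the bottom $\cM$-component, the identity of $P\xrightarrow{0}Q$;
\item on $N$, the identity.
\end{itemize}
The compatibility square with the structure maps commutes. On the bottom both structure maps are $(1;1)$. On the top we need $\mathrm{id}_N\circ(1;\alpha)=(0;0)\circ F(0)$, and this fails as stated. So the map of this shape must be reversed, or the top $\cM$-component of $Z$ must be changed.

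\emph{The main obstacle} is this compatibility in the top component. I would fix it by using the morphism $T\to Z$ instead, given by $0\to\mathrm{cone}(\mathrm{id}_P)$ on the top, the identity on the bottom and the identity on $N$. The top square then reads $(1;\alpha)\circ F(0)=0=\mathrm{id}_N\circ(0;0)$, which commutes. Moreover $0\to\mathrm{cone}(\mathrm{id}_P)$ is a quasi-isomorphism, as both sides are acyclic, so this is a morphism in $pB[F]$. Since the relation $\sim$ is symmetric, we get $[T]=[Z]$. Combining this with the previous step gives the asserted equality $[\Psi(P,\alpha,Q)]=[T]$ in $K_0[F]$.
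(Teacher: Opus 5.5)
Your argument is correct and is essentially the paper's proof. Your auxiliary triple $Z$ is exactly the middle term of the paper's short exact sequence, which has $\Psi(P,\alpha,Q)$ as subobject and a diagonal quotient, and your corrected morphism $T\to Z$ (zero into the acyclic top complex, identities elsewhere) is precisely the morphism in $pB[F]$ the paper uses, so in a write-up you can simply drop the abandoned attempt at a morphism $Z\to T$.
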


\begin{proof}
    We have the following short exact sequence in $B[F]$ obtained by na\"ively filtrating the middle triple:
    \begin{equation*}
        \begin{tikzcd}
            ((0 \arrow[r, "0"] \arrow[d, "0", shift left=1.4, rightarrowtail]
            & P, 0 \arrow[r, "0"] \arrow[d, "0", shift left=2.4, rightarrowtail] \arrow[d, "1"', shift right=1.6, rightarrowtail]
            & Q), (\, 0 \, \, \, \arrow[r, "0", shift left] \arrow[r, "0"', shift right] \arrow[d, "0", shift left=3.2, rightarrowtail] \arrow[d, "1", shift right=5, rightarrowtail]
            & FQ),((0;\alpha), (0;1))) \arrow[d, "1", shift right = 15, rightarrowtail]\\
            ((P \arrow[r, "1"] \arrow[d, "1", shift left=1.4, twoheadrightarrow]
            & P, P \arrow[r, "0"] \arrow[d, "1", shift left=2.4, twoheadrightarrow] \arrow[d, "0"', shift right=1.6, twoheadrightarrow]
            & Q), (FP \arrow[r, "\alpha", shift left] \arrow[r, "0"', shift right] \arrow[d, "1", shift left=3.2, twoheadrightarrow] \arrow[d, "0", shift right=5, twoheadrightarrow]
            & FQ),((1;\alpha),(1;1))) \arrow[d, "0", shift right=15, twoheadrightarrow]\\
            ((P \arrow[r, "0"]
            & 0, P \arrow[r, "0"]
            & 0), (FP \arrow[r, "0", shift left] \arrow[r, "0"', shift right]
            & 0),((1;0),(1;0)))
        \end{tikzcd}
    \end{equation*}
    One easily verifies that these maps are indeed morphisms in $B[F]$. Since the bottom triple is in the image of $\Delta$, we have that $[\Psi(P, \alpha, Q)]$ is equal to the middle triple in $K_0 [F]$. To show the required identity, notice that the following arrow is in $pB[F]$:
    \begin{equation*}
        \begin{tikzcd}
            ((0 \arrow[r, "0"] \arrow[d, "0", shift left=1.4]
            & 0, P \arrow[r, "0"] \arrow[d, shift left=2.4, "1"] \arrow[d, shift right=1.6, "0"']
            & Q), (FP \arrow[r, shift left, "\alpha"] \arrow[r, shift right, "0"'] \arrow[d, shift left=3.2, "1"] \arrow[d, shift right=5, "1"]
            & FQ), ((0;0),(1;1))) \arrow[d, "1", shift right=15]\\
            ((P \arrow[r, "1"]
            & P, P \arrow[r, "0"]
            & Q), (FP \arrow[r, shift left, "\alpha"] \arrow[r, shift right, "0"']
            & FQ),((1;\alpha),(1;1))).
        \end{tikzcd}
    \end{equation*}
\end{proof}

\begin{proof}[Proof (of \cref{thm:BassGraysonIso})]
As the functor $\Psi$ is obviously exact, it induces a homomorphism $ K_0 \co(F) \to K_0B[F]$. So, in order to show that $\Psi$ induces a well-defined homomorphism $\Phi \colon K_0^\mathrm{B}[F] \to K_0[F]$ it remains to check for $P, Q, R \in \cM$ and isomorphisms $\alpha \colon FP \to FQ, \beta \colon FQ \to FR$ in $\cN$ that
\begin{equation}\label{equ: additivity of Psi}
[\Psi(P, \beta \circ \alpha, R)] = [\Psi(P, \alpha, Q)] + [\Psi(Q, \beta, R)]
\end{equation}
    in $K_0[F]$. To see this we first apply \cref{lem:BassShuffle} to the object $(Q, \beta, R)$ and notice that the following morphism is in $p B[F]$:
    \begin{equation*}
        \begin{tikzcd}
            ((0 \arrow[r, "0"] \arrow[d, "0", shift left=1.4] 
            & 0, Q \arrow[r, "0"] \arrow[d, shift left = 2.4, "1"] \arrow[d, shift right=1.6, "0"']
            & R), (FQ \arrow[r, shift left, "\beta"] \arrow[r, shift right, "0"'] \arrow[d, shift left=3.2, "1"] \arrow[d, shift right=5, "1"]
            & FR),((0;0),(1;1))) \arrow[d, shift right =17, "1"]\, \, \, \, \, \, \, \, \, \, \, \, \, \\
            ((P \arrow[r, "1"]
            & P, Q \arrow[r, "0"]
            & R), (FQ \arrow[r, shift left, "\beta"] \arrow[r, shift right, "0"']
            & FR),((\alpha; \beta \circ \alpha),(1;1))).
        \end{tikzcd}
    \end{equation*}
    Now we naïvely filtrate the bottom triple above and obtain the following short exact sequence in $B[F]$:
    \begin{equation*}
        \begin{tikzcd}
            ((0 \arrow[r, "0"] \arrow[d, "0", shift left=1.4, rightarrowtail]
            & P, 0 \arrow[r, "0"] \arrow[d, "0", shift left=2.4, rightarrowtail] \arrow[d, "1"', shift right=1.6, rightarrowtail]
            & R), (0 \arrow[r, "0", shift left] \arrow[r, "0"', shift right] \arrow[d, "0", shift left=3.2, rightarrowtail] \arrow[d, "1", shift right=5, rightarrowtail]
            & FR),((0;\beta \circ \alpha), (0;1))) \arrow[d, "1", shift right = 18, rightarrowtail]\\
            ((P \arrow[r, "1"] \arrow[d, "1", shift left=1.4, twoheadrightarrow]
            & P, Q \arrow[r, "0"] \arrow[d, "1", shift left=2.4, twoheadrightarrow] \arrow[d, "0"', shift right=1.6, twoheadrightarrow]
            & R), (FQ \arrow[r, "\beta", shift left] \arrow[r, "0"', shift right] \arrow[d, "1", shift left=3.2, twoheadrightarrow] \arrow[d, "0", shift right=5, twoheadrightarrow]
            & FR),((\alpha; \beta \circ \alpha),(1;1))) \arrow[d, "0", shift right=18, twoheadrightarrow]\\
            ((P \arrow[r, "0"]
            & 0, Q \arrow[r, "0"]
            & 0), (FQ \arrow[r, "0", shift left] \arrow[r, "0"', shift right]
            & 0),((\alpha;0),(1;0)))\, \, \, \, \, \, \, \, \, \,
        \end{tikzcd}
    \end{equation*}
    We therefore have the following equalities in $K_0 [F]$:
    \begin{align*}
        [\Psi(Q, \beta, R)] &= [\Psi(P, \beta \circ \alpha, R)] + [\Psi(P, \alpha, Q)[1]]\\
        &= [\Psi(P, \beta \circ \alpha, R)] - [\Psi(P, \alpha, Q)]
    \end{align*}
    where $[1]$ denotes the obvious shift functor and the second equality follows from \cite[Lemma~6.1]{Grayson2016}. Thus \cref{equ: additivity of Psi} holds and the homomorphism $\Phi$ is defined. \\
We now assume that $\cM$ and $\cN$ are split exact and that $F$ is cofinal and show that $\Phi$ is an isomorphism. To this end, we consider the diagram
    \begin{equation*}
        \begin{tikzcd}
            K_1^{\mathrm{B}}\mathcal{M} \arrow[r] \arrow[d, "\cong"]
            & K_1^{\mathrm{B}}\mathcal{N} \arrow[r] \arrow[d, "\cong"]
            & K_0^{\mathrm{B}}[F] \arrow[r] \arrow[d, "\Phi"]
            & K_0\mathcal{M} \arrow[r] \arrow[d, equal]
            & K_0\mathcal{N} \arrow[d, equal]\\
            K_1\mathcal{M} \arrow[r]
            & K_1\mathcal{N} \arrow[r]
            & K_0 [F] \arrow[r]
            & K_0\mathcal{M} \arrow[r]
            & K_0\mathcal{N}
        \end{tikzcd}
    \end{equation*}
which is defined as follows. The bottom row is the exact sequence (\ref{equ: long exact sequence}). The top row is the exact sequence introduced in \cite[Theorem~VII.5.3]{Bas68}.
    The two left-hand vertical isomorphisms are given by (\ref{equ: iso between Bass and Grayson K1}). In order to show that $\Phi$ is an isomorphism, by the five lemma, it suffices to show the two squares involving $\Phi$ commute. We recall the homomorphism $K_0^{\mathrm{B}}[F] \to K_0 \M$ is given by $[P, \alpha, Q] \mapsto [P] - [Q]$, so the third square commutes by definition. Furthermore, the homomorphism $K_1^{\mathrm{B}} \N \to K_0^{\mathrm{B}}[F]$ is defined as follows: given an object $(N, \alpha)$ of $\Aut \N$, choose objects $N'$ in $\N$ and $M$ in $\M$ such that $N \oplus N' \cong FM$, so  that
    \[ [N, \alpha] = [N \oplus N', \alpha \oplus 1] = [FM, \widetilde{\alpha}], \]
    in $K_1^{\mathrm{B}} \N$, where $\widetilde{\alpha}$ is the automorphism of $FM$ induced by $\alpha \oplus 1$; the image of this element in $K_0^{\mathrm{B}}[F]$ is then $[M, \widetilde{\alpha}, M]$. The homomorphism $\Phi$ maps this element to $[(\substack{M \\ M}), FM, (\substack{\widetilde{\alpha} \\ 1})]$ in $K_0 [F]$. We now note we have the following exact sequence in $B[F]$ arising from na\"{i}vely filtrating the middle object:
    \begin{equation*}
        \begin{tikzcd}
            ((0 \arrow[r, "0"] \arrow[d, "0", shift left=1.4, rightarrowtail]
            & M, 0 \arrow[r, "0"] \arrow[d, "0", shift left=2.4, rightarrowtail] \arrow[d, "1"', shift right=1.6, rightarrowtail]
            & M), (0 \arrow[r, "0", shift left] \arrow[r, "0"', shift right] \arrow[d, "0", shift left=3.2, rightarrowtail] \arrow[d, "1", shift right=5, rightarrowtail]
            & FM),((0;\widetilde{\alpha}), (0;1))) \arrow[d, "1", shift right = 18, rightarrowtail]\\
            ((M \arrow[r, "1"] \arrow[d, "1", shift left=1.4, twoheadrightarrow]
            & M, M \arrow[r, "1"] \arrow[d, "1", shift left=2.4, twoheadrightarrow] \arrow[d, "0"', shift right=1.6, twoheadrightarrow]
            & M), (FM \arrow[r, "\widetilde{\alpha}", shift left] \arrow[r, "1"', shift right] \arrow[d, "1", shift left=3.2, twoheadrightarrow] \arrow[d, "0", shift right=5, twoheadrightarrow]
            & FM),((1; \widetilde{\alpha}),(1;1))) \arrow[d, "0", shift right=18, twoheadrightarrow]\\
            ((M \arrow[r, "0"]
            & 0, M \arrow[r, "0"]
            & 0), (FM \arrow[r, "0", shift left] \arrow[r, "0"', shift right]
            & 0),((1;0),(1;0))).\, \, \, \, \, \, \, \, \, \,
        \end{tikzcd}
    \end{equation*}
    Since the bottom row is diagonal the first two rows are equal in $K_0 [F]$, and since the first component of the middle row is acyclic the obvious map to the middle row from the triple
    \[ ((0 \longrightarrow  0 ,  0 \longrightarrow 0 ), (\begin{tikzcd} FM \arrow[r, shift left, "\widetilde{\alpha}"] \arrow[r, shift right, "1"'] & FM \end{tikzcd}), ((0; 0), (0; 0))) \]
    is in $pB[F]$, and thus the class of the latter triple in $K_0 [F]$ is equal to the element corresponding to the top row of the above diagram, i.e., to $[(\substack{M \\ M}), FM, (\substack{\widetilde{\alpha} \\ 1})]$. On the other hand, this element is the image of the element $[FM, \widetilde{\alpha}]$ when mapped first via the isomorphism $K_1^{\mathrm{B}}(\N) \to K_1(\N)$ given in (\ref{equ: iso between Bass and Grayson K1}) and then by the homomorphism $K_1(\N) \to K_0 [F]$ as in (\ref{equ: long exact sequence}). Hence the second square in the diagram above commutes as well, and the proof of \cref{thm:BassGraysonIso} is complete.
\end{proof}

We end this section with the following useful fact. By \cite[Lemma~6.1]{Grayson2016} we know that shifting the complexes in a triple  $X \in B[F]$ by 1 changes the sign of its class $[X]$ in $K_0[F]$. The following proposition shows that the same holds when swapping top and bottom in $X$; this is similar to \cite[Corollary~8.7]{Grayson2016} where this is proved for $K_1 \cN$.

\begin{proposition}\label{prop: sign change for swapping}
For $X \in B[F]$ we have $[\tau X] = - [X]$ in $K_0[F]$. 
\end{proposition}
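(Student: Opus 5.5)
Writing $X = (M, N, u)$ with $M = \left(\begin{smallmatrix} A \\ B \end{smallmatrix}\right)$, $N = \left(N., \substack{d \\ d'}\right)$ and $u = \left(\begin{smallmatrix} u_1 \\ u_2 \end{smallmatrix}\right)$, I would show that $[X] + [\tau X]$ is the class of a triple which is the image of $\Delta$ modulo relations in $pB[F]$. The plan is to imitate the standard argument for binary complexes in \cite[Corollary~8.7]{Grayson2016}. That argument relies on the fact that a binary complex with differentials $\left(\begin{smallmatrix} d & 0\\ 0 & d'\end{smallmatrix}\right)$ and $\left(\begin{smallmatrix} d' & 0\\ 0 & d\end{smallmatrix}\right)$ on $N \oplus N$ is isomorphic, via the swap automorphism $\sigma$ of $N\oplus N$, to a diagonal one. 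That last statement is false as stated, because conjugation by $\sigma$ only turns the top differential into the bottom one. So I would instead use a short exact sequence with a diagonal quotient, as in the proofs above.

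The concrete construction I would try is as follows. Form $X \oplus \tau X$. Its first component is $\left(\begin{smallmatrix} A\oplus B \\ B \oplus A\end{smallmatrix}\right)$, its binary complex is $N \oplus N$ with top differential $d\oplus d'$ and bottom differential $d'\oplus d$, and its quasi-isomorphisms are $u_1\oplus u_2$ and $u_2 \oplus u_1$. Now compose the bottom data with the swap isomorphisms $A\oplus B \cong B\oplus A$ in $C\cM$ and $\sigma$ on $N\oplus N$. This should give a morphism in $pB[F]$, in fact an isomorphism in $B[F]$, from $X\oplus\tau X$ to a triple $Y$ whose bottom component equals its top component but with the bottom identification changed by $\sigma$. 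The difficulty is that an isomorphism of triples must act by the same map $\psi$ on the underlying graded object $N\oplus N$ for both top and bottom, so I cannot swap only the bottom. This is the expected main obstacle.

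To circumvent it, I would use the standard rotation or elementary-matrix trick inside a filtration. Consider the triple $Z$ on $N\oplus N$ whose top is $X\oplus\tau X$ and whose bottom uses differential $\sigma(d'\oplus d)\sigma^{-1} = d\oplus d'$, but with the graded identification twisted. Realize $\sigma$ as a product of elementary (triangular) automorphisms $\left(\begin{smallmatrix}1&1\\0&1\end{smallmatrix}\right)$ etc.\ of the \emph{graded} objects. Triangular automorphisms have the feature that a binary complex whose two differentials differ by a triangular conjugation admits a short exact sequence (from the filtration $N\subset N\oplus N$) whose sub and quotient triples are diagonal, using that the diagonal blocks coincide. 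Each step thus changes the class by an element of the image of $\Delta$. Chaining these steps will show that $[X\oplus \tau X]$ equals the class of a triple whose top and bottom complexes agree up to an isomorphism of complexes. Such a triple is isomorphic via $pB[F]$ to one in the image of $\Delta$, so it is zero in $K_0[F]$.

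As a fallback, if the triangular-conjugation step proves awkward, I would use \cite[Lemma~6.1]{Grayson2016} (shift changes sign) together with a cone construction. Build the triple whose components are the mapping cones of the identity-type maps relating $\top X$ and $\bot X$. This should yield a short exact sequence $X \rightarrowtail C \twoheadrightarrow \tau X[1]$ with $C$ having acyclic, diagonalizable data. Then $[X] = [\tau X[1]]$ up to $\Delta$-classes, so $[X] = -[\tau X]$.
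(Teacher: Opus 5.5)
There is a genuine gap, and it sits exactly where the content of the proposition lies.

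\textbf{The key step is false.} Your main step writes the swap $\sigma$ of $N\oplus N$ as a product of elementary (unitriangular) graded automorphisms. This fails in general. One has $\sigma=e\cdot\mathrm{diag}(-1,1)$ with $e=\left(\begin{smallmatrix}0&1\\-1&0\end{smallmatrix}\right)$, and $e$ is a product of three elementary automorphisms. So in $K_1$ the class of $\sigma$ on $N_i\oplus N_i$ equals the class of $-1_{N_i}$. This is nonzero, e.g., for $N_i$ of odd rank over a field of characteristic $\neq 2$, where $\det\sigma=-1$.

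This is not a technicality. Both you and the paper start from the isomorphism
$X\oplus\tau X\cong Y:=\bigl(\Delta(A\oplus B),(N\oplus N,\,d\oplus d',\,d'\oplus d),(u\oplus v,\,\sigma\circ(u\oplus v))\bigr)$.
The paper then uses the second half of the proof of \cite[Lemma~6.3]{Grayson2016} and the binary-ladder lemma \cite[Lemma~3.3]{KKW}. These show that $[Y]$ is the image in $K_0[F]$ of $\sum_i(-1)^i[(N_i\oplus N_i,1,\sigma)]\in K_1\cN$. That element is in general nonzero in $K_1\cN$; for example, take $N=FA$ with $A$ the structure sheaf in degree $0$ over a field. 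It vanishes in $K_0[F]$ only for the following reason. Since $u\colon FA\to\top N$ is a quasi-isomorphism, $N$ and $FA$ have the same Euler characteristic. Hence the sum equals $\sum_i(-1)^i[(FA_i\oplus FA_i,1,\sigma)]$, which comes from $K_1\cM$ and dies by exactness of $K_1\cM\to K_1\cN\to K_0[F]$. Your argument never uses the $\cM$-component or the quasi-isomorphisms in an essential way, so it cannot account for why this class is killed.

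\textbf{Other steps that fail as stated.}

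Your concluding claim is that a triple whose top and bottom agree up to an isomorphism of complexes is zero in $K_0[F]$. This runs into the very obstacle you identified yourself: a morphism in $pB[F]$ acts by a single $\psi$ on the underlying graded object. For a counterexample, take the triple $(0,(P,\alpha,\mathrm{id}),0)$ built from a binary automorphism. Its top and bottom complexes are isomorphic. Yet its class is the image of $[\alpha]\in K_1\cN$, which is nonzero in general (e.g.\ $\alpha=2$ for $\cP(\mathbb{Z})\to\cP(\mathbb{Q})$).

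The triangular filtration only shows that a single binary complex with differentials $D$ and $eDe^{-1}$ ($D$ block diagonal, $e$ unitriangular) has diagonal subquotients. It does not show that replacing the bottom data of a given triple by its $e$-conjugate leaves the class unchanged. The filtration is not compatible with the $\cM$-component and the twisted quasi-isomorphism, and after one step the differentials are no longer block diagonal. For that step you need something like \cite[Lemma~3.3]{KKW} combined with \cite[Lemma~6.3]{Grayson2016}.

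The fallback has no construction. There are no chain maps between $\top X$ and $\bot X$; for instance, the identity of $N$ is not a chain map $(N,d)\to(N,d')$. Cones of $\mathrm{id}_X$ only give the shift relation.

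\textbf{How your idea can be repaired.}
\begin{enumerate}
\item Conjugate the bottom of $Y$ by $e$ instead of $\sigma$. This is harmless by the two lemmas just cited, because $e$ is a product of elementary automorphisms.
\item The result is $\bigl(\Delta(A\oplus B),\Delta(N\oplus N,d\oplus d'),(u\oplus v,\,u\oplus(-v))\bigr)$.
\item Move the leftover sign to the $\cM$-side. The pair $\phi=(\mathrm{id},1_A\oplus(-1_B))$ together with $\psi=\mathrm{id}$ is a morphism in $pB[F]$ from the diagonal triple $\Delta\bigl(A\oplus B,(N\oplus N,d\oplus d'),u\oplus v\bigr)$ to this triple.
\end{enumerate}
Hence its class vanishes in $K_0[F]$. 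This replaces the paper's Euler-characteristic step with an automorphism of the $\cM$-component.
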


\begin{proof}
We write $X = \left(\begin{psmallmatrix} A \\ B \end{psmallmatrix}, \left(N, \begin{smallmatrix} d \\ d' \end{smallmatrix}\right), \begin{psmallmatrix} u \\ v \end{psmallmatrix}\right)$. Let $s$ denote the endomorphism of $N \oplus N$ which swaps the two summands and consider the two complex homomorphisms $u \oplus v$ and $s \circ (u\oplus v)$ from $F(A) \oplus F(B)$ to $\top N \oplus \bot N$ and $\bot N \oplus \top N$, respectively. The triple $X \oplus \tau X$ is isomorphic to the triple $\left(\Delta(A \oplus B), \left(N\oplus N, \begin{smallmatrix} d \oplus d' \\ d' \oplus d \end{smallmatrix}\right), \left(\begin{smallmatrix} u \oplus v \\ s \circ (u \oplus v) \end{smallmatrix}\right) \right)$. By the second half of the proof of \cite[Lemma~6.3]{Grayson2016}, its class $[X \oplus \tau X]$ in~$K_0[F]$ is therefore equal to the image of the class $\left[\mathrm{Cone} \binom{u \oplus v}{s \circ (u\oplus v)}  \right]\in K_1 \cN$ under the map $K_1 \cN  \rightarrow  K_0[F]$ from (\ref{equ: long exact sequence}); here,  $\mathrm{Cone} \binom{u \oplus v}{ s \circ (u \oplus v)}$ is the binary complex with graded object $FA[1] \oplus FB[1] \oplus N \oplus N$ and which shares its differentials with $\mathrm{cone}(u \oplus v)$ and $\mathrm{cone} (s \circ (u \oplus v))$, respectively. Generalising the proof of \cite[Lemma~3.2]{KKW}, we will show in the next paragraph that this element lies in the image of the map $K_1\cM \rightarrow K_1\cN$ induced by~$F$; then, by the exactness of (\ref{equ: long exact sequence}), the class $[X \oplus \tau X]$ in $K_0[F]$ vanishes and hence $[\tau X] = - [X]$, as claimed. \\
By definition we have the binary ladder
\[\left(\mathrm{Cone} \binom{u \oplus v}{ s \circ (u \oplus v)}, \mathrm{Cone} \binom{u \oplus v}{u \oplus v}, 1_{FA\oplus FB} \oplus 1_{\top N \oplus \bot N}, 1_{FA\oplus FB} \oplus s\right)\]
in the sense of \cite[Definition 2.2(a)]{KKW}, i.e., the third component of this quadruple is an isomorphism between the top parts of the binary complexes in the first two components and the fourth component is an isomorphism between the bottom parts.  
\cite[Lemma~3.3]{KKW} then yields the second in the following chain of equalities in $K_1\cN$: 
 \[ \left[\mathrm{Cone} \binom{u \oplus v}{s \circ (u\oplus v)}  \right] = \left[\mathrm{Cone} \binom{u \oplus v}{s \circ (u\oplus v)}  \right] - \left[\mathrm{Cone} \binom{u \oplus v}{u\oplus v}  \right] \]
 \[=\sum_{i=0}^\infty (-1)^i \left[ \hspace*{-0.5em} \begin{tikzcd} FA_{i-1} \oplus FB_{i-1} \oplus N_i \oplus N_i \arrow[r, shift left, "1 \oplus 1"] \arrow[r, shift right, "1 \oplus s"'] & FA_{i-1} \oplus FB_{i-1} \oplus N_i \oplus N_i \end{tikzcd} \hspace*{-0.5em} \right]\]
 \begin{equation}\label{eq: element}
 = \sum_{i=0}^\infty (-1)^i \left[\begin{tikzcd}  N_i \oplus N_i \arrow[r, shift left, "1"] \arrow[r, shift right, "s"'] & N_i \oplus N_i \end{tikzcd} \right];
 \end{equation}
 note that the binary complexes $\begin{tikzcd} FA_{i-1} \oplus FB_{i-1}  \arrow[r, shift left, "1"] \arrow[r, shift right, "1 "'] & FA_{i-1} \oplus FB_{i-1} \end{tikzcd}$ are diagonal, which shows the third equality above. As $u$ is a quasi-iso\-morphism, the Euler characteristic of $\mathrm{cone}(u)$ vanishes; in other words, the Euler characteristic of $N$ is the same as of $FA$. Hence the same is true for their images under the homomorphism $K_0\cN \rightarrow K_1\cN$ induced by the exact functor $P \mapsto \left(\begin{tikzcd}  P \oplus P \arrow[r, shift left, "1"] \arrow[r, shift right, "s"'] & P \oplus P \end{tikzcd}\right)$. This finally proves that the alternating sum (\ref{eq: element}) is equal to 
 \[\sum_{i=0}^\infty (-1)^i \left[\begin{tikzcd}  FA_i \oplus FA_i \arrow[r, shift left, "1"] \arrow[r, shift right, "s"'] & FA_i \oplus FA_i \end{tikzcd} \right]\] 
 which is obviously in the image of the map $K_1\cM \rightarrow K_1\cN$, as we wanted to show. 
\end{proof}

\section{Products on Relative \texorpdfstring{$K_0$}{K0}}\label{Sec: 2}

The object of this section is to turn Grayson's relative $K_0$-group into a (non-unital) commutative ring. To this end, we extend the definition of so-called simplicial tensor products of complexes \cite{HKT17} to generating triples of the relative~$K_0$-group and then use a certain linear combination of these tensor products to define products in the relative $K_0$-group so that also the outgoing map in the associated exact sequence of $K$-groups becomes a ring homomorphism. We end by briefly explaining the corresponding constructions and properties for Bass $K$-theory. 

We start by recalling the Dold-Kan correspondence. Let $\Delta$ denote the category whose objects are the totally ordered sets
    \[ [n] = \{ 0 < 1 < ... < n-1 < n \}, \quad  n \in \mathbb{N},\]
and whose morphisms are the non-decreasing functions. (We hope denoting both this category and the diagonal functor introduced in \cref{Sec: 1} by $\Delta$ won't lead to any confusion.)  Given any category~$\A$, a \emph{simplicial object in~$\A$} is a functor $\Delta^{\mathrm{op}} \to \A$. The category of such simplicial objects in $\A$ is denoted by $\A^{\Delta^{\mathrm{op}}}$. For  $X \in \A^{\Delta^{\mathrm{op}}}$, we denote the image of $[n]$ by $X_n$. If $\A$ is an idempotent complete additive category, the Dold-Kan correspondence \cite[Theorem~1.2.3.7]{HigherAlg} gives us an equivalence of categories
    \begin{equation*}
        \begin{tikzcd}
            \A^{\Delta^{\mathrm{op}}} \arrow[r, shift left, "N"]
            & C^\infty \A \arrow[l, shift left, "\Gamma"]
        \end{tikzcd}
    \end{equation*}
between $\A^{\Delta^{\mathrm{op}}}$ and the category $C^\infty \A$ of chain complexes in $\A$ supported in non-negative degrees. Here, as usual, $N$ denotes the normalized chain complex (also called Moore complex) functor.

We now start to define products on relative $K_0$-groups. So, let $\cM$ be an idempotent complete exact category equipped with a bi-exact functor $\otimes: \M \times \M \to \M$, called a \emph{tensor product}, which we assume to be associative and commutative in the usual sense. Furthermore, we assume we have a distinguished object $I$ of~$\M$ which behaves as a ``multiplicative identity" in the obvious way. We recall from \cite[Definition~5.4]{HKT17}, given two chain complexes $A$ and $B$ in $C \M$, the \emph{simplicial tensor product of $A$ and $B$} is defined as follows:
    \[ A \simp B := N( \mathrm{diag}( \Gamma A \otimes \Gamma B)) \]
where $\mathrm{diag}(\Gamma A \otimes \Gamma B) \in \cM^{\Delta^{\mathrm{op}}}$ is given by  $\mathrm{diag}( \Gamma A \otimes \Gamma B)_n:= (\Gamma A)_n \otimes (\Gamma B)_n$ (i.e., the diagonal of the bi-simplicial object $\Gamma A \otimes \Gamma B$). Note that, by \cite[Lemma 5.6]{HKT17}, $A \simp B$ is bounded again, i.e., an object of $C\cM$. We extend this definition to binary chain complexes by viewing them as objects in $C \cM^2$ and applying the simplicial tensor product componentwise. The resulting pair of chain complexes is then a binary chain complex again.

Note, by the Eilenberg-Zilber Theorem \cite[Satz 2.15]{DP61}, the simplicial tensor product $A \otimes_\Delta B$ is homotopy equivalent to the total complex of the double complex $A \otimes B$. We choose to work with the former product because the former is more directly compatible with the exterior powers used in the next section. 

We now assume we are given two idempotent complete exact categories $\M$ and $\N$ with tensor products and with identities $I_{\M}$ and $I_{\N}$, respectively, and let  $F: \M \to \N$ be an exact functor which respects tensor products in the obvious way, and such that $FI_{\M}=I_{\N}$.

\begin{definition}\label{def: simplicial tensor products of triples}
    Given two objects $X = (M, N, u)$ and $Y = (M', N', u')$ in~$C[F]$ or in $B[F]$, we define their \emph{simplicial tensor product} as follows:
    \[ X \simp Y := (M \simp M', N \simp N', u \simp u'). \]
\end{definition}

Note that, by the Eilenberg-Zilber Theorem \cite[Satz~2.15]{DP61}, the simplicial tensor product $u \simp u'$ is a quasi-isomorphism again.

\begin{lemma}
    The simplicial tensor products on $C[F]$ and $B[F]$ induce well-defined products on $K_0 C[F]$, $K_0 B[F]$, $K_0 p C[F]$ and $K_0 p B[F]$ which turn these Grothendieck groups into commutative rings with unity.
\end{lemma}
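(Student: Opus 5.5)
Our plan is to reduce everything to properties of the simplicial tensor product of complexes established in \cite{HKT17}, and then to check that these properties are compatible with the extra data in triples: the quasi-isomorphisms $u$ and the passage to the quotient by morphisms in $pC[F]$ and $pB[F]$.

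\emph{Step 1: bi-exactness on triples.} Since $\Gamma$ and $N$ are exact equivalences (Dold--Kan), and $\otimes$ is bi-exact, the functor $(A,B)\mapsto A\simp B$ is bi-exact on $C\cM$ and $C\cN$. The same then holds componentwise on $C\cM^2$ and $B\cN$. Moreover, $F$ commutes with $\Gamma$, $N$ and $\otimes$ up to canonical isomorphism, so $F(M\simp M')\cong FM\simp FM'$. Hence $u\simp u'$ is a morphism $F(M\simp M')\to \both(N\simp N')$, and it is a quasi-isomorphism by Eilenberg--Zilber as noted above. Functoriality in morphisms of triples is clear, since $\simp$ is a functor in both variables and the defining squares are preserved. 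Thus $\simp$ is a bi-exact functor on $C[F]$ and on $B[F]$, and it induces a biadditive product on $K_0C[F]$ and $K_0B[F]$.

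\emph{Step 2: the ring axioms.} Associativity and commutativity up to natural isomorphism are obtained by transporting the associativity and symmetry isomorphisms of $\otimes$ through the diagonal bisimplicial construction: $\mathrm{diag}$ commutes with levelwise $\otimes$, and $N\Gamma\cong\mathrm{id}$. These isomorphisms are natural and compatible with $F$ (because $F$ respects tensor products), so they give isomorphisms of triples. For the unit, we take the triple with $I_\cM$ in degree $0$ (respectively the pair $\binom{I_\cM}{I_\cM}$ and the diagonal binary complex $I_\cN$), with identity structure maps. Since $\Gamma$ of a degree-$0$ object is constant, $\Gamma(I)\otimes\Gamma(A)\cong\Gamma A$ levelwise, so this triple is a unit up to isomorphism. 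Note that the unit in $K_0B[F]$ lies in the image of $\Delta$; this is harmless here, because the lemma only concerns the four Grothendieck rings, not $K_0[F]$.

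\emph{Step 3: compatibility with $p$-relations (the main point).} We must show that if $X\to X'$ is a morphism in $pC[F]$ or $pB[F]$, then for any $Y$ the induced $X\simp Y\to X'\simp Y$ is again in $pC[F]$ or $pB[F]$. This amounts to two checks.
\begin{itemize}
\item If $\psi$ is an isomorphism, then so is $\psi\simp\mathrm{id}$.
\item If $\phi$ is a quasi-isomorphism, then so is $\phi\simp\mathrm{id}$.
\end{itemize}
The second check is the only step needing care. We would use Eilenberg--Zilber to replace $\simp$ by the total tensor complex up to natural homotopy equivalence, reducing to the statement that $\mathrm{Tot}(\phi\otimes B)$ is a quasi-isomorphism. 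For this we use the assumption that exact categories support long exact sequences: $\mathrm{cone}(\phi)$ is acyclic, and tensoring an acyclic bounded complex with a bounded complex via an exact functor $-\otimes B_j$ gives a bicomplex with acyclic rows, whose total complex is acyclic by a finite filtration argument. Alternatively, \cite{HKT17} may already record that $\simp$ preserves quasi-isomorphisms, and we would cite that.

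It then follows that the equivalence relation $\sim$ is a congruence for the product. Hence the product descends to $K_0pC[F]$ and $K_0pB[F]$, and the ring axioms are inherited from Step 2.
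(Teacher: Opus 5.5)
Your proposal is correct and follows essentially the same route as the paper. The paper simply calls the products on $K_0C[F]$ and $K_0B[F]$ obvious, with unit the class of $(I_\cM, I_\cN, 1)$, and cites the Eilenberg--Zilber theorem for the passage to $K_0pC[F]$ and $K_0pB[F]$; you supply the omitted details (bi-exactness on triples, the ideal property of the $p$-relations, and the cone-and-filtration argument that $-\otimes_\Delta B$ preserves quasi-isomorphisms), though transporting associativity needs $\Gamma N\cong\mathrm{id}$ on simplicial objects rather than $N\Gamma\cong\mathrm{id}$, both being part of the Dold--Kan equivalence.
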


\begin{proof}
    It is obvious that the simplicial tensor product induces well-defined products on $K_0 C[F]$ and $K_0 B[F]$ and that the class of the element $[I_{\M}, I_{\N}, 1]$ is an identity element, where $I_{\M}$ and $I_{\N}$ are considered as (ordinary or binary pairs of) complexes concentrated in degree zero. The Eilenberg-Zilber theorem \cite[Satz~2.15]{DP61} implies that these ring structures induce ring structures on $K_0 p C[F]$ and $K_0 p B[F]$.
\end{proof}

Note that the simplicial tensor product on $K_0pB[F]$ does not directly induce a product on the quotient $K_0[F]$ because the image of the ring homomorphism $\Delta \colon K_0pC[F] \rightarrow K_0pB[F]$ contains $1$ and is hence not an ideal. Nonetheless, we have a natural product on $K_0[F]$ which we define now. We remember that the diagonal functor $\Delta$ is split by $\bot$ (or~$\top$) and that therefore the obvious homomorphism
\begin{equation} \label{Equ: identification}
\mathrm{ker}\left(\bot \colon K_0pB[F] \to K_0pC[F]\right) \longrightarrow \mathrm{coker}\left(\Delta \colon K_0pC[F] \to K_0pB[F]\right)
\end{equation}
is bijective. We accordingly identify $K_0[F]$, i.e., the target of this homomorphism, with its source which is an ideal in $K_0pB[F]$ and which in particular inherits a well-defined product. This is the product structure on~$K_0[F]$ (without 1) we will be considering. In other words:

\begin{definition}\label{def: products on K_0[F]}\label{def: products on relative K_0}
Given $x, y \in K_0[F]$, we choose representatives $\tilde{x}, \tilde{y}$ in $K_0pB[F]$ and define the \emph{product} $xy$ as the class of $(\tilde{x} - \Delta \bot \tilde{x})(\tilde{y} - \Delta \bot \tilde{y})$ in $K_0[F]$.
\end{definition} 

If $x, y$ are the classes of triples $X, Y \in B[F]$, then, by \cref{prop: sign change for swapping}, their product $xy$ in $K_0[F]$ can be written as the class of a single triple as well: 
\[xy = [(X \otimes_\Delta Y) \oplus (\tau X \otimes_\Delta \Delta \bot Y) \oplus (\Delta \bot X \otimes_\Delta \tau Y)].\]
(Beware, although  $[\Delta \bot {X}]=0 $ in $K_0[F]$, the class of $(\Delta \bot X) \otimes_\Delta \tau Y$ does not vanish in general.)

\begin{proposition} \label{prop: compatibility of products}
We equip $K_0\cM$ with the usual product and $K_1\cN$ with the trivial product. Then these products are compatible with the just defined product on $K_0[F]$ via the exact sequence (\ref{equ: long exact sequence}).
\end{proposition}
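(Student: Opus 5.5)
We have to check two things: the map $\partial\colon K_0[F]\to K_0\cM$ is multiplicative, and the map $\iota\colon K_1\cN\to K_0[F]$ is multiplicative when $K_1\cN$ carries the zero product. The latter means $\iota(a)\iota(b)=0$ for all $a,b\in K_1\cN$.

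\emph{The outgoing map.} We work with the identification (\ref{Equ: identification}). Take $x=[X]$ and $y=[Y]$ with $X,Y\in B[F]$. We first extend $\partial$ to all of $K_0pB[F]$ by
\[\tilde\partial(Z)=[\top Z_{\cM}]-[\bot Z_{\cM}]\in K_0qC\cM\cong K_0\cM,\]
where $Z_\cM$ is the $\cM$-component of $Z$. This extension is additive, and it is well defined because morphisms in $pB[F]$ have quasi-isomorphisms as their $\cM$-components. It also kills the image of $\Delta$. We now write $\tilde x=X-\Delta\bot X$, so that $\top\tilde x=[\top X]-[\bot X]$ and $\bot\tilde x=0$, and similarly for $\tilde y$. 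Since $\top$ and $\bot$ commute with $\otimes_\Delta$, we get
\[\tilde\partial(\tilde x\tilde y)=(\top\tilde x)(\top\tilde y)-(\bot\tilde x)(\bot\tilde y)=\partial(x)\partial(y),\]
provided the product on $K_0qC\cM$ induced by $\otimes_\Delta$ agrees with the usual product on $K_0\cM$. This holds by Eilenberg--Zilber: $A\otimes_\Delta B$ is homotopy equivalent to $\mathrm{Tot}(A\otimes B)$, and the Euler characteristic of the latter is multiplicative.

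\emph{The incoming map.} Let $a,b\in K_1\cN$ be represented by $P,Q\in B^{\mathrm q}\cN$. Then $\iota(a)=[(0,P,0)]$, and $\bot(0,P,0)=(0,\bot P,0)$. The triple $(0,\bot P,0)$ lies in $C[F]$ and is acyclic, so its class in $K_0pC[F]$ is zero: the map $(0,0,0)\to(0,\bot P,0)$ has identity on the $\cM$-part, but it is not an isomorphism on the $\cN$-part, so this needs more care. We therefore compute directly. We have
\[\tilde x\tilde y=(0,P\otimes_\Delta Q,0)-(0,P\otimes_\Delta\Delta\bot Q,0)-(0,\Delta\bot P\otimes_\Delta Q,0)+(0,\Delta\bot P\otimes_\Delta\Delta\bot Q,0),\]
since the $\cM$-parts are $0$ and so all the quasi-isomorphisms are zero. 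This is exactly $\iota$ applied to the element
\[(P-\Delta\bot P)(Q-\Delta\bot Q)\in K_0B^{\mathrm q}\cN,\]
whose class in $K_1\cN$ is the product $ab$ as defined in \cite{HKT17}, using the same representatives. The key input from \cite{HKT17} is that this product vanishes in $K_1\cN$. Since $\iota$ is induced by the functor $N\mapsto(0,N,0)$, which is compatible with these four terms, we conclude $\iota(a)\iota(b)=\iota(ab)=0$.

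The main obstacle is this last step. We must check that the product on $K_1\cN$ in \cite{HKT17} is indeed defined by the same kernel-of-$\bot$ trick, so that its vanishing applies, and that the inclusion $B^{\mathrm q}\cN\to B[F]$ respects the two identifications. If \cite{HKT17} instead phrases the vanishing as the statement $[P\otimes_\Delta Q]=[P\otimes_\Delta\Delta\bot Q]+[\Delta\bot P\otimes_\Delta Q]-[\Delta\bot P\otimes_\Delta\Delta\bot Q]$ in $K_1\cN$, we transport that identity along $\iota$ instead.
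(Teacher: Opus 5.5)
Your proof is correct and follows the paper's argument. For the outgoing map you carry out the same expansion of $(X-\Delta\bot X)(Y-\Delta\bot Y)$, organised via the $\cM$-components $\top,\bot$. For the incoming map you, like the paper, transport the analogously defined product on $K_1\cN$ along $N\mapsto(0,N,0)$ and invoke \cite[Proposition~5.11]{HKT17}; your ``main obstacle'' disappears because that result says every single class $[N\otimes_\Delta N']$ with $N,N'\in B^{\mathrm q}\cN$ vanishes in $K_1\cN$, so each of your four terms is already zero.
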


\begin{proof}
We first show that the outgoing homomorphism $K_0[F] \rightarrow K_0\cM$ is compatible with products. Let $x, x' \in K_0[F]$ be represented by $X = (M,N,u) $ and $X' = (M',N',u')$ in $B[F]$. Then the product $xx'$ which is the class of 
\[X \otimes_\Delta X' - (\Delta \bot X) \otimes_\Delta X' - X \otimes_\Delta (\Delta\bot X') + (\Delta\bot X) \otimes_\Delta (\Delta \bot X')\]
in $K_0[F]$ is mapped to 
\begin{align*}(\top& M \otimes_\Delta \top M' - \bot M \otimes_\Delta \bot M')- (\bot M \otimes_\Delta \top M'- \bot M \otimes_\Delta \bot M')\\
& \hspace*{0.5em} - (\top M \otimes_\Delta \bot M' - \bot M \otimes_\Delta \bot M') + (\bot M \otimes_\Delta \bot M' - \bot M \otimes_\Delta \bot M')\\
&= \top M \otimes_\Delta \top M' - \bot M \otimes_\Delta \top M' - \top M \otimes_\Delta \bot M' + \bot M \otimes_\Delta \bot M' \\
&= (\top M - \bot M) \otimes_\Delta (\top M' - \bot M')
\end{align*}
which in turn is equal to the product of the images of $x$ and $y$ in $K_0\cM$; note that the product on $K_0qC\cM$ given by simplicial tensor products is compatible with the usual product on $K_0\cM$ via the canonical isomorphism $K_0\cM \rightarrow K_0qC\cM$. 
\\
In order to show that the incoming homomorphism $K_1 \cN \rightarrow K_0[F]$ is compatible with products, we recall from \cite[Proposition~5.11] {HKT17} that the class $[N \otimes_\Delta N']$ in $K_1 \cN$ of the simplicial tensor product of any two binary complexes $N,N'$ in $B^\mathrm{q} \cN$ vanishes. In particular the trivial product on $K_1\cN$ can be interpreted as the product induced by the simplicial tensor product or, even more complicatedly, by the product defined analogously to the product in \cref{def: products on K_0[F]}. The latter product is obviously compatible with the product in $K_0[F]$ via the incoming homomorphism.
\end{proof}

\begin{remark} Similarly to above, one easily shows that Proposition 2.4 is also true if we both replace 
$\bot$ with $\top$ in \cref{def: products on K_0[F]} and the outgoing homomorphism $K_0[F] \rightarrow K_0 \cM$ in (\ref{equ: long exact sequence}) with its negative. 
\end{remark}

\begin{remark} \label{rem: products on Bass's K_1}
We recall the following folklore statements for Bass's $K_1$. 
\begin{enumerate}[(a)]
\item Given two objects $(P, \alpha)$ and $(Q, \beta)$ of $\Aut(\M)$ we define their \emph{tensor product} as follows:
    \[ (P, \alpha) \otimes (Q, \beta) := (P \otimes Q, \alpha \otimes \beta) \]
    This turns $K_0 \mathrm{Aut}(\cM)$ into a commutative ring with unity.
\item Let 
\[\tilde{K}_0 \mathrm{Aut} (\cM) := \mathrm{ker}(K_0 \bot \colon K_0 \mathrm{Aut}(\cM) \rightarrow K_0 \cM)\] 
where $K_0 \bot$ is induced by the functor $\bot \colon (P,\alpha) \mapsto P$ and let $\tilde{I}$ denote the subgroup of $\tilde{K}_0 \mathrm{Aut} (\cM)$ generated by elements of the form 
\[[P, \beta \circ \alpha] - [P,\alpha] - [P, \beta] + [P,1]\] 
whenever there is an object $P$ of $\cM$ with automorphisms~$\alpha, \beta$. Similarly to the considerations before \cite[Lemma 1.2]{Koe00} we see that $\tilde{I}$ is an ideal of $K_0 \Aut(\M)$. Furthermore, we have the isomorphism
    \[ K_0 \Aut(\M) / \widetilde{I} \cong K_0 \M \oplus K_1^{\mathrm{B}}(\M). \]
    The resulting product $xy$ for $x, y \in K_0 \M \oplus K_1^{\mathrm{B}}(\M)$ is the usual one if $x, y \in K_0 \M$, is trivial if $x, y \in K_1^{\mathrm{B}}(\M)$ and defines a $K_0 \M$-module structure on $K_1^{\mathrm{B}}(\M)$ if $x \in K_0 \M$ and $y \in K_1^{\mathrm{B}}(\M)$.
\end{enumerate}
\end{remark}

\begin{remark}\label{rem: products on Bass's relative K_0}\mbox{}
The following statements concerning Bass's relative $K_0$-group are easy to prove (similarly to above, see also the considerations right before \cite[Lemma~1.3]{Koe00}):
\begin{enumerate}[(a)]
    \item Let $ X:=(P, \alpha, Q)$ and $Y:=(R, \beta, S)$ be objects in $\co(F)$. Their tensor product in $\mathrm{co(F)}$ is defined by
       \[ X \otimes Y := (P \otimes R, \alpha \otimes \beta, Q \otimes S). \]
    This turns $K_0\mathrm{co}(F)$ into a commutative ring with unity.
\item Let 
\[\tilde{K}_0 \mathrm{co}(F) := \mathrm{ker}(K_0 \bot \colon K_0 \mathrm{co}(F) \rightarrow K_0 \cM)\] 
where $K_0 \bot$ is induced by the functor $\bot \colon (P,\alpha,Q) \mapsto Q$ and let $\tilde{J}$ denote the subgroup of $\tilde{K}_0 \mathrm{co} (F)$ generated by elements of the form 
\[[P, \beta \circ \alpha,R] - [P,\alpha,Q] - [Q, \beta, R] + [Q,1,Q]\] 
whenever there are objects $P,Q,R$ in $\cM$ and isomorphisms~$\alpha \colon FP \rightarrow FQ$ and $\beta \colon FQ \rightarrow FR$ in $\cN$. Then $\tilde{J}$ is an ideal of $K_0 \mathrm{co}(F)$ and we have the isomorphism
    \begin{equation} \label{equ: isomorphism for Bass relative K_0}
    K_0 \mathrm{co}(F) / \widetilde{J} \cong K_0 \M \oplus K_0^\mathrm{B}[F]. 
    \end{equation}
This isomorphism gives rise to a multiplication on $K_0^\mathrm{B}[F]$ which, in contrast to the multiplication on $K_1^\mathrm{B}\cM$ (see \cref{rem: products on Bass's K_1}(b)), is in general not trivial. 
\item  This product on $K_0^\mathrm{B}[F]$  is compatible with the trivial product on $K_1\cN$ (see \cref{rem: products on Bass's K_1}(b)) and the usual product on $K_0\cM$ via Bass's exact sequence \cite[Theorem~VII.5.3]{Bas68} assuming $\cM$ and $\cN$ are split exact and $F$ is cofinal.
\item The functor $\Psi \colon \mathrm{co}(F) \rightarrow B[F]$ (see (\ref{def: functor Psi})) is compatible with tensor products. The induced homomorphism $\Phi \colon K_0^\mathrm{B}[F] \rightarrow  K_0[F] $ (see \cref{thm:BassGraysonIso}) is compatible with products. 
\end{enumerate}
\end{remark}

\section{Power Operations on Relative \texorpdfstring{$K_0$}{K0}}\label{Sec: 3}

In this section we equip Grayson's relative $K_0$-group with exterior power operations. To this end, similarly to the previous section, we first extend the definition of exterior powers of complexes \cite{HKT17} to generating triples of the relative~$K_0$-group and then use a combination of such exterior powers to define exterior power operations on the relative $K_0$-group which are compatible with both the incoming and outgoing map in the associated exact sequence of $K$-groups. As in the previous section, we end by briefly explaining the corresponding constructions and properties for Bass $K$-theory. 

We assume we are given an assembly of power operations as in \cite[Definition 1.1]{KZ25}. Recall this means we are given a sequence $\cM_n$, $n \ge 0$, of exact categories (in most examples, all $\cM_n$ are equal to a single exact category $\cM$), a bi-exact functor 
\[ \otimes \colon \cM_n \times \cM_p \rightarrow \cM_{n+p} \quad \textrm{ for each } n,p \ge 0,\]
and a functor
\[\textrm{Fil}_k (\cM_n) \rightarrow \cM_{nk} \quad \textrm{ for each } k \ge 1, n \ge 0,\]
which maps an object of $\textrm{Fil}_k(\cM_n)$, i.e., a sequence $V_1 \rightarrowtail \ldots \rightarrowtail V_k$ of admissible monomorphisms in $\cM_n$ of length~$k-1$, to an object $V_1 \wedge \ldots \wedge V_k$ in $\cM_{nk}$, and these data are subject to certain axioms. If all the $k-1$ monomorphisms are the identity on $V:= V_1 = \ldots =V_k$, we write $\bigwedge ^k V$ for $V_1 \wedge \ldots \wedge V_k$. A reader who prefers not to work in this axiomatic setup may just assume that the given assembly of power operations is the standard one where every $\cM_n$ is equal to the exact category $\cP(X)$ of locally free $\cO_X$-modules of finite rank on a quasi-compact scheme~$X$, the functor $\otimes$ is the usual tensor product and $V_1 \wedge \ldots \wedge V_k$ is defined to be the image of the tensor product $V_1 \otimes \ldots \otimes V_k$ in the $k^\mathrm{th}$ exterior power $\bigwedge^k V_k$. 

We furthermore assume that all categories $\cM_n$, $n \ge 0$, are idempotent complete. By \cite[Proposition 1.2]{KZ25}, we obtain an assembly of power operations on the categories $C\cM_n$, $n \ge 0$, as follows. Similarly to \cref{Sec: 2}, the simplicial tensor product defines functors 
\[ \otimes_\Delta \colon C \cM_n \times C \cM_p \rightarrow C\cM_{n+p} \quad n, p \ge 0; \]
in the same vein, mapping a sequence $V_{1}. \rightarrowtail \ldots \rightarrowtail V_k.$ in $\textrm{Fil}_k(C \cM_n)$ to 
\[V_1. \wedge \ldots \wedge V_k. := N (\Gamma V_1. \wedge \ldots \wedge \Gamma V_k.) \quad \textrm{ in } \quad  C \cM_{nk} \]
defines a functor
\[\textrm{Fil}_k(C\cM_n) \rightarrow C\cM_{nk}\quad \textrm{ for every } k \ge 1, n\ge 0.\]
Furthermore, according to \cite[Corollary~1.7]{KZ25}, by applying the construction above to each of the two differentials in binary complexes we obtain an assembly of power operations on $B\cM_n$, $n\ge 0$.

We now furthermore assume we are given two assemblies of power operations, on $\cM_n$, $n \ge 0$, and on $\cN_n$, $n \ge 0$, respectively, and exact functors $F_n \colon \cM_n \rightarrow \cN_n$, $n \ge 0$, which respect the given structures in the obvious sense. The main example we have in mind is the pull-back functor $f^*: \cP(Y) \rightarrow \cP(X)$ for a given morphism $f \colon X \rightarrow Y$ between quasi-compact schemes $X$ and $Y$. We also assume that, for every quasi-isomorphism $f.$ in~$C\cM_n$ and in  $C\cN_n$ and for all $k \ge 1$, also $\bigwedge^k(f.)$ is a quasi-isomorphism. This additional assumption is satisfied for example for the standard assembly of power operations on $\cP(X)$, see \cite[Proposition~1.9]{KZ25}.

\begin{definition}\label{def: power operations for triples}
Given $k \ge 1$ and $n\ge 0$ and a sequence $X_1 \rightarrowtail \ldots \rightarrowtail X_k$ of admissible monomorphisms in $C[F_n]$ or $B[F_n]$, where $X_i = (M_i, N_i, u_i)$, we define the triple 
\[X_1 \wedge \ldots \wedge X_k := (M_1 \wedge \ldots \wedge M_k, N_1 \wedge \ldots \wedge N_k, u_1 \wedge \ldots \wedge u_k) \]
in $C[F_{nk}]$ or $B[F_{nk}]$.
\end{definition}

Note that, by \cite[Proposition 1.4]{KZ25}, the morphism $u_1 \wedge \ldots \wedge u_k$ is a quasi-isomorphism again. 

\begin{lemma}\label{lem: power operations on B[F]}
The assignments 
\[(X,Y) \mapsto X \otimes_\Delta Y \textrm{ and } (X_1 \rightarrowtail \ldots \rightarrowtail X_k) \mapsto X_1 \wedge \ldots \wedge X_k\]
define an assembly of power operations on the exact categories $C[F_n]$, $n \ge 0$, and $B[F_n]$, $n \ge 0$, which preserve the subcategories $pC[F_n]$ and $pB[F_n]$, respectively. 
\end{lemma}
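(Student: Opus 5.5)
The plan is to reduce everything to the already established assemblies of power operations on $C\cM_n$, $C\cN_n$, $B\cN_n$ (via \cite[Proposition 1.2]{KZ25} and \cite[Corollary~1.7]{KZ25}) by working componentwise. A triple $(M,N,u)$ is determined by its components in $C\cM_n^2$ (or $C\cM_n$), in $B\cN_n$ (or $C\cN_n$), and by the comparison morphism $u\colon F_nM\to\both N$. Both the simplicial tensor product of \cref{def: simplicial tensor products of triples} and the wedge construction of \cref{def: power operations for triples} are defined componentwise. So first I check that the outputs are again objects of $C[F_{n+p}]$, $B[F_{n+p}]$ and $C[F_{nk}]$, $B[F_{nk}]$.

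\textbf{Well-definedness on objects.} For the wedge I must check that $u_1\wedge\ldots\wedge u_k$ has the right source. Because the $F_n$ respect the assemblies, $F_{nk}(M_1\wedge\ldots\wedge M_k)\cong F_nM_1\wedge\ldots\wedge F_nM_k$ naturally, and $F_n$ commutes with $\Gamma$ and $N$ since it is additive. Quasi-isomorphy is given by \cite[Proposition 1.4]{KZ25}, as noted after \cref{def: power operations for triples}, and by Eilenberg--Zilber for $\otimes_\Delta$. I also need that admissible monomorphisms $X_i\rightarrowtail X_{i+1}$ in $C[F_n]$ or $B[F_n]$ are componentwise admissible monomorphisms. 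Here I would observe that a sequence in $B[F]$ is exact iff its $M$- and $N$-components are exact, which is the exact structure of \cref{def: triples and relative K_0}. Functoriality on morphisms $(\phi,\psi)$ follows from functoriality of the componentwise constructions together with naturality of the identification above, which keeps the required square commuting.

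\textbf{Axioms.} Next I go through the axioms of \cite[Definition 1.1]{KZ25}: bi-exactness of $\otimes$, associativity and commutativity isomorphisms, exactness properties of the wedge with respect to the filtration (the short exact sequences relating $V_1\wedge\ldots\wedge V_k$ to lower wedges and tensor products), and so on. Each axiom asserts the existence of a natural isomorphism or of a short exact sequence among functorially defined objects. It holds componentwise for $C\cM_n^2$ and $B\cN_n$ by the cited results. The only extra point is that these componentwise isomorphisms and exact sequences assemble into morphisms of triples, meaning they are compatible with the $u$'s. This again follows from the naturality of the structure maps and from $F$ respecting them.

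\textbf{Preservation of $pC$, $pB$ and the main obstacle.} Finally, suppose $(\phi,\psi)$ is a morphism in $pB[F_n]$, so $\phi$ is a quasi-isomorphism and $\psi$ an isomorphism. Then $\bigwedge^k(\phi)$ is a quasi-isomorphism by the standing assumption, and $\bigwedge^k(\psi)$ is an isomorphism by functoriality. Tensoring with a fixed triple, and tensoring two such morphisms, gives a quasi-isomorphism by Eilenberg--Zilber. Strictly, since $\otimes_\Delta$ is homotopy equivalent to the total tensor product, I should use the fact that tensoring with a bounded complex preserves quasi-isomorphisms in $\cM_n$, as holds for locally free modules. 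I expect this to be the main obstacle. I would handle it by applying \cite[Proposition 1.4]{KZ25} or its tensor analogue, or, failing that, by writing a quasi-isomorphism as having acyclic cone and using that the tensor product is bi-exact, so it preserves acyclic complexes.
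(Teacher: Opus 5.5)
Your proposal is correct, and it is the componentwise verification that the paper compresses into the single word ``Straightforward'': everything reduces to the known assemblies on $C\cM_n$, $C\cM_n^2$ and $B\cN_n$, with naturality and the compatibility of the $F_n$ with the structures handling the comparison maps $u$. One small slip is in the last step: for the wedge functor on filtrations in $pB[F_n]$ you need $\phi_1\wedge\ldots\wedge\phi_k$ to be a quasi-isomorphism, not just $\bigwedge^k(\phi)$, and this follows from \cite[Proposition~1.4]{KZ25} exactly as it does for $u_1\wedge\ldots\wedge u_k$.
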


\begin{proof}
Straightforward. 
\end{proof}

In particular we obtain power operations 
\[\lambda^k \colon K_0C[F_n] \rightarrow K_0 C[F_{nk}] \quad \textrm{ and } \quad \lambda^k \colon K_0 B[F_n] \rightarrow K_0 B[F_{nk}] \] 
and 
\[\lambda^k \colon K_0p C[F_n] \rightarrow K_0p C[F_{nk}] \quad \textrm{ and } \quad \lambda^k \colon K_0p B[F_n] \rightarrow K_0p B[F_{nk}]. \]
As in \cref{Sec: 2}, we employ the identification (\ref{Equ: identification}) to define power operations $\lambda^k \colon K_0[F_n] \rightarrow K_0[F_{nk}]$. More explicitly: 

\begin{definition}\label{def: exterior powers on relative K_0}
Given $x \in K_0[F_n]$, we choose a representative $\tilde{x}$ of $x$ in $K_0pB[F_n]$ and define $\lambda^k(x)$ to be the class of $\lambda^k(\tilde{x}-\Delta \bot\tilde{x})$ in $K_0[F_{nk}]$. 
\end{definition}

If $x = [X]$ is the class of a triple $X$ in $B[F_n]$, we can write $\lambda^k(x)$ as the class of a single triple in the following way. 

\begin{lemma}\label{lem: computing lambda^k} 
Let $X = \left(\begin{psmallmatrix} A \\ B \end{psmallmatrix}, \left(N, \begin{smallmatrix} d \\ d' \end{smallmatrix}\right), \begin{psmallmatrix} u \\ v \end{psmallmatrix}\right) \in B[F_n]$. Then we have:
    \[ \lambda^k([ X]) = \left[ \left( \left( \substack{\Lambda_+^k(A, B) \\ \Lambda_-^k(A, B)} \right), \left( \widehat{\Lambda}^k N, \substack{\Lambda_+^k(d, d') \\ \Lambda_-^k(d, d')} \right), \left( \substack{\Lambda_+^k(u, v) \\ \Lambda_-^k(u, v)} \right) \right)\right] \quad \textrm{ in } \quad K_0[F_{nk}],\]
where $\Lambda_+^k, \Lambda_-^k$ and $\widehat{\Lambda}^k $ are defined as follows:
    \begin{align*}
        \Lambda_+^k(A, B) &= \bigoplus_{\substack{a+b_1+\cdots+b_u=k \\ u \text{ even} \\ a, u \geq 0, b_1, ..., b_u \geq 1}} \Lambda^a(A) \otimes_\Delta \Lambda^{b_1}(B) \otimes \cdots \otimes_\Delta \Lambda^{b_u}(B),\\
        \Lambda_-^k(A, B) &= \bigoplus_{\substack{a+b_1+\cdots+b_u=k \\ u \text{ odd} \\ a, u \geq 0, b_1, ..., b_u \geq 1}} \Lambda^a(A) \otimes_\Delta \Lambda^{b_1}(B) \otimes \cdots \otimes_\Delta \Lambda^{b_u}(B),\\
        \widehat{\Lambda}^k N & = \bigoplus_{\substack{a+b_1+\cdots+b_u=k \\ u \geq 0, a, b_1, ..., b_u \geq 1}} \Lambda^a N \otimes_\Delta \Lambda^{b_1} N \otimes_\Delta \cdots \otimes_\Delta \Lambda^{b_u} N.
    \end{align*}
\end{lemma}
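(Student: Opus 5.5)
The plan is to compute $\lambda^k(X - \Delta\bot X)$ in the Grothendieck group $K_0pB[F_{nk}]$ using the standard $\lambda$-ring calculus for differences. This calculus is available because, by \cref{lem: power operations on B[F]}, we have an assembly of power operations on $B[F_n]$. Writing $\lambda_t(x)=\sum_k \lambda^k(x)t^k$, the axioms of an assembly of power operations give additivity $\lambda_t(x+y)=\lambda_t(x)\lambda_t(y)$ on $K_0 B[F]$. Hence
\[\lambda_t(X-\Delta\bot X)=\lambda_t(X)\,\lambda_t(\Delta\bot X)^{-1}, \qquad \lambda_t(\Delta\bot X)^{-1}=\sum_{u\ge 0}(-1)^u\Big(\sum_{b\ge1}\lambda^b(\Delta\bot X)t^b\Big)^u.\]
Extracting the coefficient of $t^k$ gives
\[\lambda^k(X-\Delta\bot X)=\sum_{\substack{a+b_1+\dots+b_u=k\\ a,u\ge0,\ b_i\ge1}}(-1)^u\,\Lambda^a(X)\otimes_\Delta\Lambda^{b_1}(\Delta\bot X)\otimes_\Delta\cdots\otimes_\Delta\Lambda^{b_u}(\Delta\bot X)\]
in $K_0pB[F_{nk}]$. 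The exterior powers and simplicial tensor products are computed componentwise on triples, by \cref{def: power operations for triples} and \cref{def: simplicial tensor products of triples}.

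Next I would rewrite each signed summand as an honest triple in the quotient $K_0[F_{nk}]$. A summand with $u$ odd carries a minus sign. I plan to absorb that sign by applying $\tau$, which is allowed by \cref{prop: sign change for swapping}, since $[\tau Y]=-[Y]$ in $K_0[F]$. Let $T_{a,\underline b}$ denote the summand triple. Its top and bottom $\cM$-components are $\Lambda^aA\otimes_\Delta\Lambda^{b_1}B\otimes_\Delta\cdots$ and $\Lambda^aB\otimes_\Delta\Lambda^{b_1}B\otimes_\Delta\cdots$ respectively. Its binary $\cN$-component has underlying graded object $\Lambda^aN\otimes_\Delta\Lambda^{b_1}N\otimes_\Delta\cdots$, with top differential built from $d$ in the first factor and $d'$ elsewhere, and bottom differential built from $d'$ throughout. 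The resulting element is
\[\sum_{u\ \mathrm{even}}[T_{a,\underline b}]+\sum_{u\ \mathrm{odd}}[\tau T_{a,\underline b}],\]
and I write this as the class of the direct sum of these triples.

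The last step is to regroup this direct sum so that it matches the triple in the statement. The top $\cM$-component is $\bigoplus_{u\ \mathrm{even}}\Lambda^aA\otimes\cdots\oplus\bigoplus_{u\ \mathrm{odd}}\Lambda^aB\otimes\cdots$. The bottom one is the same with the roles of $A$ and $B$ exchanged. This does not literally equal $\Lambda^k_\pm(A,B)$ as written, because of the terms with first factor $\Lambda^a B$. I will handle these with a reindexing bijection. A term with $u$ odd whose first factor is $\Lambda^aB$, $a\ge1$, should be read as a term with $u+1$ factors from $B$ and $a=0$, and this reindexing flips parity. Terms with $a=0$ have first factor $\Lambda^0=I$, and these pair up in the same way. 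Under this bijection the summands of $\Lambda^k_+(A,B)\oplus\Lambda^k_-(A,B)$ correspond exactly to the summands above. The differentials correspond similarly, and the $\cN$-summands are indexed by compositions with all parts $\ge1$, which gives $\widehat\Lambda^kN$.

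I expect the main obstacle to be this bookkeeping. It requires checking that the top differential $\Lambda^k_+(d,d')$ and the bottom differential $\Lambda^k_-(d,d')$, as well as the quasi-isomorphisms $\Lambda^k_\pm(u,v)$, are exactly the ones obtained after applying $\tau$ to the odd terms. The ambiguous point is how the $a=0$ terms (first factor $I$) are distributed, and I would settle it by writing out the cases $k=1,2$ explicitly. If the bookkeeping fails to give an isomorphism of triples, I would fall back on a weaker route: add or remove diagonal triples, which vanish in $K_0[F]$, to reconcile the two sides.
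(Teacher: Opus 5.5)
Your route is the paper's: expand $\lambda^k(X-\Delta\bot X)$ by the formula for $\lambda^k$ of a difference (your $\lambda_t$ computation is exactly \cite[Equation 2.3]{GraysonExterior}), absorb the signs $(-1)^u$ by applying $\tau$ via \cref{prop: sign change for swapping}, and then reindex. However, your treatment of the $a=0$ terms is wrong as stated, and your ``fallback'' is in fact the required step.

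The $a=0$ summands do not ``pair up''. They are
\[
T_{0,\underline b}=\Lambda^{b_1}(\Delta\bot X)\otimes_\Delta\cdots\otimes_\Delta\Lambda^{b_u}(\Delta\bot X)=\Delta\bigl(\Lambda^{b_1}(\bot X)\otimes_\Delta\cdots\otimes_\Delta\Lambda^{b_u}(\bot X)\bigr),
\]
with $\bot X\in C[F_n]$, so they are diagonal triples. Hence they are fixed by $\tau$ and have class zero in $K_0[F_{nk}]$. If you keep them, your direct sum is strictly larger than the triple in the statement. Already for $k=1$ you obtain $X\oplus\Delta\bot X$ rather than $X$.

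So you must discard these terms; the paper does exactly this (``the terms for $a=0$ vanish in our context''). What remains is indexed by $a\ge1$, which is precisely the index set of $\widehat\Lambda^kN$. The $a=0$ summands of $\Lambda^k_\pm(A,B)$, $\Lambda^k_\pm(d,d')$ and $\Lambda^k_\pm(u,v)$ then arise solely from your reindexing of the terms whose first factor is $\Lambda^aB$ with $a\ge1$. These are the odd-$u$ terms in the top components and the even-$u$ terms in the bottom components. With this correction your argument is complete and coincides with the paper's proof.
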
 

Note, in order to match up the index sets for $\Lambda^k_+(d, d')$ and $\widehat{\Lambda}^k N$ here, we consider any tuple $(a, b_1, \dots, b_u)$ in $\Lambda^k_+(d, d')$ as the tuple $(\tilde{a}, \tilde{b}_1, \ldots, \tilde{b}_{u-1})$ in $\widehat{\Lambda}^k N$ where $\tilde{a} = b_1, \tilde{b}_1 = b_2, \ldots, \tilde{b}_{u-1} = b_u$ if $a=0$, and as the tuple $(a, b_1, \dots, b_u)$ if $a \ge 1$; similarly for the index sets of $\Lambda^k_-(d, d')$ and $\widehat{\Lambda}^k N$.

\begin{proof}
Using \cite[Equation 2.3]{GraysonExterior} which computes $\lambda^k$ of a difference, we obtain: 
\begin{align*}
\lambda^k ([X]) &= \lambda^k ([X] - [\Delta\bot X])\\
& = \sum_{\substack{a+b_1+\cdots+b_u=k \\ a, u \geq 0, b_1, ..., b_u \geq 1}} (-1)^u \lambda^a([X])  \lambda^{b_1}([\Delta \bot X])  \cdots \lambda^{b_u}([\Delta \bot X])\\
& = \left[\bigoplus_{\substack{a+b_1+\cdots+b_u=k \\ u \geq 0, a, b_1, ..., b_u \geq 1}} \Lambda^a(\tau^u X) \otimes_\Delta \Lambda^{b_1}(\Delta \bot X) \otimes_\Delta \cdots \otimes_\Delta \Lambda^{b_u}(\Delta \bot X)\right]
\end{align*}
To see the latter equality, we use the fact that $\tau$ commutes with $\Lambda^a$ and amounts to a minus sign in $K_0^\mathrm{B}[F_{nk}]$ according to \cref{prop: sign change for swapping} and we notice that the terms for $a=0$ vanish in our context. Finally, to see that the term obtained is indeed equal to $\left[ \left( \left( \substack{\Lambda_+^k(A, B )\\ \Lambda_-^k(A, B)} \right), \left( \widehat{\Lambda}^k N, \substack{\Lambda_+^k(d, d') \\ \Lambda_-^k(d, d')} \right), \left( \substack{\Lambda_+^k(u, v) \\ \Lambda_-^k(u, v)} \right) \right)\right]$,  we split the index set into two halves according to whether $u$ is even or odd and then, for the top components in the triples, we interpret any index where $(a \ge 1, u \textrm{ odd})$ as an index where $(a = 0, u \textrm{ even})$, and similarly for the bottom components. 
\end{proof}

\begin{proposition}\label{prop: compatibility of power operations with exact sequence} Let $k \ge 1$. Via the exact sequence (\ref{equ: long exact sequence}), the just defined power operation $\lambda^k \colon K_0[F_n] \rightarrow K_0[F_{nk}]$ is compatible with the power operation $\lambda^k \colon K_0\cM_n \rightarrow K_0\cM_{nk}$ and $\lambda^k \colon K_1\cN_n \rightarrow K_1\cN_{nk}$ resulting from the given assemblies of power operations on $\cM_n$ and $\cN_n$. 
\end{proposition}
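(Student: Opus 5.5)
We prove compatibility with the two maps separately, mirroring the proof of \cref{prop: compatibility of products}. Throughout we work with the identification (\ref{Equ: identification}), so that $K_0[F_n]$ is identified with the ideal $\ker(\bot)$ of $K_0pB[F_n]$, and $\lambda^k$ on $K_0[F_n]$ is the restriction of the power operation $\lambda^k$ on the $\lambda$-ring-like structure $K_0pB[F_n]$ to this kernel.

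\emph{Outgoing map.} Let $x=[X]$ with $X=(M,N,u)$. The outgoing map $K_0[F_n]\to K_0\cM_n$ is the composite of $K_0pB[F_n]\to K_0qC\cM_n\oplus K_0qC\cM_n$, $[X]\mapsto([\top M],[\bot M])$, with the difference map. The functors $\top$ and $\bot$ send $X_1\wedge\cdots\wedge X_k$ to $\top M_1\wedge\cdots\wedge\top M_k$, respectively $\bot M_1\wedge\cdots\wedge \bot M_k$. They therefore commute with $\lambda^k$ on $K_0pB[F_n]$ and $K_0qC\cM_n$, extended to differences via the formula \cite[Equation 2.3]{GraysonExterior} for $\lambda^k$ of a difference. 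Hence $\tilde x-\Delta\bot\tilde x$ maps to the pair $(\top M-\bot M,\,0)$ in the Grothendieck ring of pairs. Since $\lambda^k$ on the second factor of a zero-difference vanishes, $\lambda^k(\tilde x-\Delta\bot\tilde x)$ maps to $(\lambda^k(\top M-\bot M),0)$. Taking the difference gives $\lambda^k([\top M]-[\bot M])$.

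It remains to check that $\lambda^k$ on $K_0qC\cM_n$, given by simplicial exterior powers, agrees with the classical $\lambda^k$ on $K_0\cM_n$ under the canonical isomorphism. This is the content of \cite{HKT17}/\cite{KZ25}, since the Dold--Kan exterior power of a complex has the expected Euler characteristic. Equivalently, one can compute directly with \cref{lem: computing lambda^k}: its top minus bottom component equals $\sum(-1)^u\lambda^a(A)\lambda^{b_1}(B)\cdots\lambda^{b_u}(B)=\lambda^k([A]-[B])$.

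\emph{Incoming map.} Let $y\in K_1\cN_n$ be represented by $N\in B^{\mathrm q}\cN_n$, so its image is $[(0,N,0)]$. Since $\bot(0,N,0)=(0,\bot N,0)$ and $\Delta\bot$ of this triple is $(0,\Delta\bot N,0)$, the element $\lambda^k$ of the image is, by definition, the class of $\lambda^k([(0,N,0)]-[(0,\Delta\bot N,0)])$. The inclusion $B^{\mathrm q}\cN_n\hookrightarrow B[F_n]$, $N\mapsto(0,N,0)$, is compatible with $\otimes_\Delta$ and $\wedge$, because $\Lambda^a(0)=0$ for $a\ge1$ and the relevant structure maps are trivial. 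So this class is the image of $\lambda^k([N]-[\Delta\bot N])$ computed in $K_0B^{\mathrm q}\cN_n$.

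\emph{Main obstacle.} The difficult step is identifying $\lambda^k([N]-[\Delta\bot N])$, computed via Grayson's difference formula, with the power operation $\lambda^k(y)$ on $K_1\cN_n$ from \cite{HKT17}/\cite{KZ25}. Expanding with the difference formula, every term with $u\ge1$ contains a factor $\Lambda^{b}(\Delta\bot N)$. Such a term is a simplicial tensor product of binary acyclic complexes, and its class in $K_1$ vanishes by \cite[Proposition~5.11]{HKT17}. The only exception is the $a=0$ terms, which consist solely of diagonal factors and so die in the cokernel. The $u=0$ term is $[\Lambda^kN]$, which is exactly $\lambda^k(y)$ as defined in \cite{HKT17}. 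This cancellation argument is the one I would carry out carefully. In particular, one must check that each vanishing in $K_1\cN_{nk}$ maps to zero in $K_0[F_{nk}]$; this follows because the map $K_1\cN_{nk}\to K_0[F_{nk}]$ is a homomorphism.
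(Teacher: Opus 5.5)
Your proposal is correct and follows essentially the paper's argument. For the outgoing map, you use that $\top$ and $\bot$ commute with $\lambda^k$, which is exactly the alternative argument in the remark following the proposition; the paper's main proof instead reads off top minus bottom from \cref{lem: computing lambda^k}, which you also mention. For the incoming map, you use the same expansion via \cite[Equation 2.3]{GraysonExterior}, with the $u\ge1$ terms killed by \cite[Proposition~5.11]{HKT17} and the diagonal $a=0$ terms vanishing in $K_1$, only run in the opposite direction.
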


\begin{proof} By \cref{prop: sign change for swapping}, any $x \in K_0[F_n]$ can be represented by a triple $X \in B[F_n]$. We use the same notation for the components of $X$ as in \cref{lem: computing lambda^k}. This lemma then shows that the difference in $K_0qC\cM_{nk}$ of the classes of the top and bottom complexes in the first component of $\lambda^k([X])$ is equal to $[\Lambda_+^k(A, B)] - [\Lambda_-^k(A, B)]$ which in turn is equal to $\lambda^k([A] - [B])$, again by \cite[Equation 2.3]{GraysonExterior}; note $\lambda^k$ here denotes the power operation resulting from the assembly of power operations on $C\M_n$, $n\ge 0$ (see above) which in turn is obviously compatible with the power operation $\lambda^k \colon K_0\cM_n \rightarrow K_0\cM_{nk}$ via the canonical isomorphism $K_0\cM_{nk} \rightarrow K_0 qC\cM_{nk}$. This shows the first statement. \\
To prove the second statement, let $x \in K_1\cN_n$ be represented by $N \in B^\mathrm{q}\cN_n$. Then, by definition of $\lambda^k$ on $K_1$ (see \cite[(3.8)]{KZ25}) we have in $K_1\cN_{nk}$:
\begin{align*}
    \lambda^k(x) &= [\Lambda^k(N)] \\
    &= \sum_{\substack{a+b_1+\cdots+b_u=k \\ a, u \geq 0, b_1, ..., b_u \geq 1}} (-1)^u [\Lambda^a(N) \otimes_\Delta \Lambda^{b_1}(\Delta \bot N) \otimes_\Delta \cdots \otimes_\Delta \Lambda^{b_u}(\Delta \bot N)]
\end{align*}
where all summands for $u\not= 0$  vanish by \cite[Proposition~5.11] {HKT17}. This element is mapped to the element
\begin{align*}
\sum_{\substack{a+b_1+\cdots+b_u=k \\ a, u \geq 0, b_1, ..., b_u \geq 1}} (-1)^u [(0, \Lambda^a(N) \otimes_\Delta \Lambda^{b_1}(\Delta \bot N) \otimes_\Delta \cdots \otimes_\Delta \Lambda^{b_u}(\Delta \bot N),0)]
\end{align*}
in $K_0[F_{nk}]$ which in turn is equal to
\[\lambda^k \Big([(0,N,0)] - [(0,\Delta \bot N, 0)]\Big)\]
again by \cite[Equation 2.3]{GraysonExterior}. This proves the second statement and hence finishes the proof of \cref{prop: compatibility of power operations with exact sequence}.
\end{proof}

\begin{remark}
Rather than using the somewhat combinatorially involved \cref{lem: computing lambda^k}, one could alternatively argue as follows in the first half of the proof above.\\
Using \cite[Equation 2.3]{GraysonExterior}, we obtain 
\begin{align*}
        \lambda^k([X] &- [\Delta \bot X]) = \sum_{\substack{a+b_1+\cdots+b_u=k \\ a, u \geq 0, b_1, ..., b_u \geq 1}} (-1)^u \lambda^a [X] \lambda^{b_1} [\Delta \bot X] \cdots \lambda^{b_u} [\Delta \bot X]\\
        =& \sum_{\substack{a+b_1+\cdots+b_u=k \\ a, u \geq 0, b_1, ..., b_u \geq 1}} (-1)^u [\Lambda^a(X) \otimes_\Delta \Lambda^{b_1}(\Delta \bot X) \otimes_\Delta \cdots \otimes_\Delta \Lambda^{b_u}(\Delta \bot X)]
\end{align*}
in $K_0B[F_{nk}]$. The image in $K_0qC\cM_{nk}$ of this element's class in $K_0[F_{nk}]$ is
\begin{align*}
&\sum_{\substack{a+b_1+\cdots+b_u=k \\ a, u \geq 0, b_1, ..., b_u \geq 1}} (-1)^u [\Lambda^a(A) \otimes_\Delta \Lambda^{b_1}(B) \otimes_\Delta \cdots \otimes_\Delta \Lambda^{b_u}(B)]\\
&\hspace*{3em} - \sum_{\substack{a+b_1+\cdots+b_u=k \\ a, u \geq 0, b_1, ..., b_u \geq 1}} (-1)^u [\Lambda^a(B) \otimes_\Delta \Lambda^{b_1}(B) \otimes_\Delta \cdots \otimes_\Delta \Lambda^{b_u}(B)]\\
&\hspace*{1em} = \lambda^k ([A] - [B]) - \lambda^k([B] - [B]) = \lambda^k ([A]- [B]) ,
\end{align*}
(again by \cite[Equation 2.3]{GraysonExterior}), as was to be shown. 
\end{remark} 

\begin{remark}\label{rem: power operations on Bass K_1}
In this remark we look at power operations on Bass's~$K_1^\mathrm{B}$. \\
First, recall from \cref{rem: products on Bass's K_1} that we have an isomorphism 
\[K_0\textrm{Aut}(\cM)/\tilde{I} \cong K_0\cM \oplus K_1^\mathrm{B}\cM\] 
for any exact category $\cM$. Given an assembly of power operations on $\cM_n$, $n \ge 0$, it is not difficult to verify (cf.~\cite[Lemma~1.2]{Koe00}) that the assignment $(P, \alpha) \mapsto (\Lambda^k(P), \Lambda^k(\alpha))$ defines a map $\lambda^k \colon K_0\textrm{Aut}(\cM_n) \rightarrow K_0\textrm{Aut}(\cM_{nk})$ and then induces a map $\lambda^k\colon K_0 \cM_n \oplus K_1^\mathrm{B} \cM_{n} \rightarrow K_0\cM_{nk} \oplus K_1^\mathrm{B} \cM_{nk}$ which becomes the usual power operation $\lambda^k$ when restricted to $K_0(\cM_n)$ and becomes a homomorphism when restricted to $K_1^\mathrm{B}(\cM_n)$. In the next paragraph, we sketch a proof of the fact that the canonical map $\varphi \colon K_1^\mathrm{B}\cM_n \rightarrow K_1\cM_n$ (see (\ref{equ: iso between Bass and Grayson K1})) is compatible with the operations~$\lambda^k$. \\
First, beware, although we have $\lambda^k([N]) = [\Lambda^k(N)]$ in $K_1(\cM_{nk})$ for any $N \in B^\mathrm{q} \cN_n$, we don't have $\lambda^k([(P, \alpha)]) = [({\Lambda}^k P, {\Lambda}^k \alpha)]$ in $K_1^\mathrm{B}\cM_{nk}$ for $(P, \alpha)$ in $\Aut(\M_n)$. We rather have (using \cite[Equation 2.3]{GraysonExterior}): 
\begin{align*}
\lambda^k&([(P, \alpha)]) = [\lambda^k((P,\alpha)-(P,\mathrm{id}))]\\
&= \sum_{\substack{a+b_1+\cdots+b_u=k \\  u \geq 0, a, b_1, ..., b_u \geq 1}} (-1)^u [\Lambda^a(P,\alpha) \otimes \Lambda^{b_1}(P,\mathrm{id}) \otimes \ldots \otimes \Lambda^{b_u}(P,\mathrm{id})]
\end{align*}
where the terms for $a=0$ have been omitted because they vanish. The homomorphism $\varphi$ sends this alternating sum to the corresponding alternating sum of the classes of the binary automorphisms 
\[(\Lambda^a(P) \otimes \Lambda^{b_1}(P) \otimes \ldots \otimes \Lambda^{b_u}(P), \Lambda^a(\alpha) \otimes \Lambda^{b_1}(\mathrm{id}) \otimes \ldots \otimes \Lambda^{b_u}(\mathrm{id}), \mathrm{id}).\] 
For a fixed $u$, the sum of such terms is exactly the $(u+1)$th cross-effect functor of~$\Lambda^k$; more precisely, the alternating sum arrived at above is equal to 
\[\sum_{u \ge 1} (-1)^{u-1}\left[\left(\mathrm{cr}_u(\Lambda^k)(P,\ldots, P), \mathrm{cr}_u(\Lambda^k)(\alpha, \mathrm{id}, \ldots, \mathrm{id}), \mathrm{id}\right)\right].\] 
On the other hand, by \cite[Lemma 2.2]{Koe01} (also see \cite[Section 3]{dBT}), the binary complex $\Lambda^k(P,\alpha,\mathrm{id})$ of length $k$ can be filtered by subcomplexes such that the successive quotients are the shifted binary automorphisms 
$\left(\mathrm{cr}_u(\Lambda^k)(P,\ldots, P), \mathrm{cr}_u(\Lambda^k)(\alpha, \mathrm{id}, \ldots, \mathrm{id}), \mathrm{id}\right)[u-1]$, $u \ge 1$. It remains to observe that shifting a binary complex by $u-1$ yields the sign $(-1)^{u-1}$ in~$K_1$ by \cite[Corollary 8.7]{Grayson2016}.
\end{remark} 

\begin{remark}\label{rem: power operations on Bass relative K_1}
In this remark we look at power operations on Bass's relative~$K_0$-group. To this end, we assume the situation introduced before \cref{def: power operations for triples}. 
\begin{enumerate}[(a)]
\item For $X= (P, \alpha, Q) \in \mathrm{co}(F_n)$ define $\Lambda^k(X):= (\Lambda^k(P), \Lambda^k(\alpha), \Lambda^k(Q))$. Then, as in \cref{rem: power operations on Bass K_1}, see also \cite[Lemma~1.3]{Koe00}, the assignment $X \mapsto \Lambda^k(X)$ defines a map $\lambda^k \colon K_0\mathrm{co}(F_n) \rightarrow K_0\mathrm{co}(F_{nk})$ and then induces a map 
\[\lambda^k \colon K_0^\mathrm{B}[F_n] \rightarrow K_0^\mathrm{B}[F_{nk}]\] 
via the isomorphism (\ref{equ: isomorphism for Bass relative K_0}). In contrast to \cref{rem: power operations on Bass K_1}, this operation $\lambda^k$ is not a homomorphism in general.  
\item Similarly to \cref{lem: computing lambda^k}, for $X= (P, \alpha, Q) \in \mathrm{co}(F_n)$, we can again describe $\lambda^k([X]) \in K_0^B[F_{nk}]$ in terms of the class of a single object of $\mathrm{co}(F_{nk})$ in the following way. Let $\tau X:= (Q, \alpha^{-1}, P)$, $\bot X:= Q$ and $\Delta \bot X := (Q, \mathrm{id}_{FQ}, Q)$. Then we have: 
\begin{align*}
\hspace*{2em} \lambda^k ([X])& = \left[\bigoplus_{\substack{a+b_1+\cdots+b_u=k \\ u \geq 0, a, b_1, ..., b_u \geq 1}} \Lambda^a(\tau^u X) \otimes \Lambda^{b_1}(\Delta \bot X) \otimes \cdots \otimes \Lambda^{b_u}(\Delta \bot X)\right]\\
& = [(\Lambda_+^k(P, Q), \widehat{\Lambda}^k \alpha, \Lambda_-^k(P, Q))]
\end{align*}
 where $\Lambda_+^k(P, Q)$ and $\Lambda_-^k(P, Q)$ are defined similarly as in \cref{lem: computing lambda^k} and $\widehat{\Lambda}^k\alpha$ is defined as follows: 
 \[\widehat{\Lambda}^k \alpha = \bigoplus_{\substack{a+b_1+\cdots+b_u=k \\ u \geq 0, a, b_1, ..., b_u \geq 1}} \Lambda^a(\alpha^{(-1)^u}) \otimes \Lambda^{b_1}(1) \otimes \cdots \otimes \Lambda^{b_u}(1).\]
 \item By part (b), the map $K_0^\mathrm{B}[F_{nk}] \rightarrow K_0\cM_{nk}$, $[(P,\alpha, Q)] \mapsto [P]-[Q]$, sends $\lambda^k([X])$ to $[\Lambda^k_+(P,Q)] - [\Lambda^k_-(P,Q)] = \Lambda^k([P]-[Q])$ and hence commutes with~$\lambda^k$.  The incoming map $K_1^\mathrm{B} \cN_n \rightarrow K_0^\mathrm{B}[F_{n}]$ in Bass's exact sequence \cite[Theorem~VII.5.3]{Bas68} commutes with $\lambda^k$ as well (assuming $\cM_n$ and $\cN_n$ are split exact and the $F_n$ are cofinal); to see this we just observe that this map is the restriction of the analogously defined map $K_0 \mathrm{Aut}(\cN_n)/\tilde{I} \rightarrow K_0\mathrm{co}(F_n)/\tilde{J}$ which obviously commutes with $\lambda^k$. 
 \item Finally, the homomorphism $\Phi \colon K_0^\mathrm{B}[F_n] \rightarrow K_0[F_n]$ from \cref{thm:BassGraysonIso} also commutes with $\lambda^k$; this follows from the very definitions.
 \end{enumerate}
\end{remark}

\section{Lambda Ring Axioms for Relative \texorpdfstring{$K_0$}{K0}}

The main result of this section is that the products and exterior power operations introduced in the previous two sections turn Grayson's relative $K_0$-group into a $\lambda$-ring (without unity). We begin by proving that the product axiom in the classical definition of a $\lambda$-ring follows from the other axioms. To prove the remaining axiom (concerning composition of exterior power operations), we follow the approach from \cite{HKT17} which relies on the fact that the Grothendieck group of the exact category consisting of polynomial functors over $\ZZ$ is the free $\lambda$-ring in one variable. 

Recall a {\it pre-$\lambda$-ring} is a commutative ring $K$ together with maps $\lambda^k$, $k \ge 1$, from $K$ to itself such that $\lambda^1(x) = x$ for all $x \in K$ and such that
\begin{equation}\label{eq: sum axiom}
\lambda^k(x+y) =  \sum_{i=0}^{k} \lambda^{k-i}(x) \lambda^i(y)
\end{equation}
for all $ x,y \in K$ and $k \ge 1$. As we want to allow our (pre-)$\lambda$-rings to be without unity, our definition does not include a $\lambda^0$ and any term such as $\lambda^0(x)\lambda^k(y)$ is to mean just $\lambda^k(y)$ here and below.  The pre-$\lambda$-ring $K$ is called a {\it $\lambda$-ring} if moreover 
 \begin{align}
        \lambda^k(xy) &= P_k(\lambda^1(x), \lambda^2(x), ..., \lambda^k(x); \lambda^1(y), \lambda^2(y), ..., \lambda^k(y)), \label{eq: product axiom}\\
        \lambda^k (\lambda^l(x)) &= P_{k, l}(\lambda^1(x), \lambda^2(x), ..., \lambda^{kl}(x))
        \label{eq: composition axiom}
    \end{align}
for all $x, y \in K$ and $k, l \ge 1$ where $P_k, P_{k, l}$ are certain universal integral polynomials (see \cite[I, \S 1]{FultonLang}). We call (\ref{eq: product axiom}) and (\ref{eq: composition axiom}) the product and composition axiom, respectively. 

In the context below, the product axiom could be proved by working out a bivariate version of the proof of the composition axiom (see \cref{thm: K_0B[F] is a lambda ring} below) or, similarly to the proof of \cite[Theorem~2.2]{Koe00}, by using the characteristic free Cauchy decomposition as developed by Akin, Buchsbaum and Weyman in \cite{ABW}. We however use a shortcut provided by the following somewhat surprising proposition. 

\begin{proposition}\label{prop: omitting product axiom}
Let $K$ be a pre-$\lambda$-ring which satisfies the composition axiom. Then $K$ is a $\lambda$-ring. 
\end{proposition}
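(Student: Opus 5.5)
The plan is to reduce the product axiom to the composition axiom with $l=2$, via the polarization identity obtained from the sum axiom~(\ref{eq: sum axiom}) for $k=2$:
\[
\lambda^2(x+y) = \lambda^2(x) + xy + \lambda^2(y),
\qquad\text{so}\qquad
xy = \lambda^2(x+y) - \lambda^2(x) - \lambda^2(y).
\]
Applying $\lambda^k$, I then expand $\lambda^k(xy)$ into an integral polynomial expression in the values $\lambda^j(x)$ and $\lambda^j(y)$, using only the two axioms available. The point is that this expression is produced by a procedure that does not depend on $K$. I then identify it with $P_k$ by comparing with a genuine $\lambda$-ring in which the $\lambda^j(x)$ and $\lambda^j(y)$ are algebraically independent.

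The expansion has three steps.
\begin{enumerate}[(i)]
\item \emph{Differences.} The sum axiom says $z \mapsto \lambda_t(z) := 1 + \sum_{k\ge1}\lambda^k(z)t^k$ is a homomorphism from $(K,+)$ to the multiplicative group $1 + tK[[t]]$. This is meaningful without a unit in $K$, by adjoining one formally. Hence $\lambda_t(a-b-c) = \lambda_t(a)\,\lambda_t(b)^{-1}\lambda_t(c)^{-1}$, and $\lambda^k(a-b-c)$ is a universal integral polynomial, without constant term, in the $\lambda^i(a)$, $\lambda^i(b)$, $\lambda^i(c)$ with $i\le k$.
\item \emph{Composition.} Substituting $a=\lambda^2(x+y)$, $b=\lambda^2(x)$, $c=\lambda^2(y)$, the composition axiom~(\ref{eq: composition axiom}) rewrites each $\lambda^i(\lambda^2(z))$ as $P_{i,2}(\lambda^1(z),\dots,\lambda^{2i}(z))$.
\item \emph{Sums again.} The sum axiom rewrites each $\lambda^j(x+y)$ as $\sum_i \lambda^{j-i}(x)\lambda^i(y)$.
\end{enumerate}
The outcome is $\lambda^k(xy) = Q_k(\lambda^1(x),\dots,\lambda^{2k}(x);\lambda^1(y),\dots,\lambda^{2k}(y))$ for an integral polynomial $Q_k$ determined purely formally by the sum axiom and the polynomials $P_{i,2}$. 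In particular $Q_k$ is the same for every pre-$\lambda$-ring satisfying the composition axiom.

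To show $Q_k = P_k$, run the same computation in a genuine $\lambda$-ring $R$ in which the family $\{\lambda^j(x),\lambda^j(y)\}_{j\ge1}$ is algebraically independent over $\ZZ$. One choice is the free $\lambda$-ring on two generators, i.e.\ $\Lambda\otimes\Lambda$ with $x=e_1\otimes1$ and $y=1\otimes e_1$, where $\lambda^j(x)=e_j\otimes1$ and $\lambda^j(y)=1\otimes e_j$. If non-unitality causes trouble, one can pass to the augmentation ideal, since all polynomials involved lack constant terms. Since $R$ satisfies both the product axiom and the computation above, $P_k(\lambda^\bullet x;\lambda^\bullet y) = Q_k(\lambda^\bullet x;\lambda^\bullet y)$ in $R$, and algebraic independence forces $P_k = Q_k$ as polynomials. (Here $P_k$ is regarded as a polynomial in the first $2k$ variables of each set, using only its first $k$.) Hence $\lambda^k(xy)=P_k(\dots)$ holds in $K$.

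The main obstacle is making rigorous that the manipulation in (i)--(iii) is a formal identity of polynomials: every step must rewrite by a universal polynomial rule valid in any pre-$\lambda$-ring with the composition axiom, independent of $K$. This matters especially in step~(i), where inverting $\lambda_t$ must be done without a unit in $K$. I would handle it by formulating everything in $\ZZ[\lambda^j(x),\lambda^j(y)]$-valued power series, for instance via a ring homomorphism from a polynomial ring in the symbols $\lambda^j(x),\lambda^j(y)$ to $K$ (or to $K$ with a unit adjoined), and checking that each rewriting commutes with that evaluation.
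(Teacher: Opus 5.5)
Your proposal is correct and follows the paper's proof in all essentials: the identity $\lambda^2(x+y)=\lambda^2(x)+xy+\lambda^2(y)$ and the composition axiom with $l=2$ express $\lambda^k(xy)$ as a universal polynomial $Q_k$, and comparison with the free $\lambda$-ring in two variables gives $Q_k=P_k$. The only difference is cosmetic: you isolate $\lambda^k(xy)$ by inverting $\lambda_t$ directly, whereas the paper defines $Q_k$ recursively by induction on $k$ from $\lambda^k(xy+\lambda^2x+\lambda^2y)=\lambda^k(xy)+\sum_{i=1}^{k}\lambda^{k-i}(xy)\,\lambda^i(\lambda^2x+\lambda^2y)$, which is the same inversion carried out coefficient by coefficient.
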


\begin{proof}
We define polynomials $Q_k \in \ZZ[X_1, \ldots, X_{2k}; Y_1, \ldots, Y_{2k}]$, $k\ge 1$, by induction on $k$ as follows: let $Q_1 := X_1 Y_1$ and
\begin{align*}
Q_k := P_{k,2}&\left(X_1+Y_1, X_2 + X_1Y_1 + Y_2, \ldots, \sum_{i=0}^{2k}X_{2k-i}Y_i \right) \\
&- \sum_{i=1}^k Q_{k-i}  \sum_{j=0}^{i} P_{i-j,2}(X_1, \ldots, X_{2(i-j)})P_{j,2}(Y_1, \ldots, Y_{2j}).
\end{align*}
We now show that 
\begin{align}\label{eq: product axiom with Q}
\lambda^k(xy) = Q_k(\lambda^1(x), \ldots, \lambda^{2k}(x); \lambda^1(y), \ldots, \lambda^{2k}(y))
\end{align}
for all $x,y \in K$. We proceed by induction on $k$. For $k=1$ this is obvious. Writing $\lambda^ix$ for $\lambda^i(x)$, etc., we obtain for $k \ge 2$:
\begin{align*}
 \lambda^k&\lambda^2(x+y) \overset{(\ref{eq: composition axiom})}{=} P_{k,2}\left(\lambda^1(x+y), \lambda^2(x+y), \ldots, \lambda^{2k}(x+y)\right) \\
& \overset{(\ref{eq: sum axiom})}{=} P_{k,2}\left(x+y, \lambda^2x+xy+\lambda^2y, \ldots, \sum_{i=0}^{2k}\lambda^{2k-i}x\lambda^iy \right).
\end{align*} 
On the other hand, we have:
\begin{align*}
\lambda^k\lambda^2(x+y) & \overset{(\ref{eq: sum axiom})}{=} \lambda^k(\lambda^2x + xy + \lambda^2y)\\
& \overset{(\ref{eq: sum axiom})}{=} \lambda^k(xy) + \sum_{i=1}^{k} \lambda^{k-i}(xy) \lambda^i(\lambda^2x + \lambda^2y) \\
& \overset{(\ref{eq: sum axiom})}{=} \lambda^k(xy)  + \sum_{i=1}^k \lambda^{k-i}(xy) \sum_{j=0}^{i} \lambda^{i-j}\lambda^2x \; \lambda^j\lambda^2y.
\end{align*}
Now, for $i = 1, \ldots, k-1$, the inductive hypothesis yields $\lambda^{k-i}(xy)= Q_{k-i}(\lambda^1x, \ldots, \lambda^{2(k-i)}x;\lambda^1y, \ldots, \lambda^{2(k-i)}y)$. Furthermore, for any $j \ge 1$,  we have $\lambda^j\lambda^2x\overset{(\ref{eq: composition axiom})}{=}P_{j,2}(\lambda^1x, \ldots, \lambda^{2j}x)$ and $\lambda^j\lambda^2y\overset{(\ref{eq: composition axiom})}{=}P_{j,2}(\lambda^1y, \ldots, \lambda^{2j}y)$. The desired formula (\ref{eq: product axiom with Q}) finally follows from all this.

To finish the proof of \cref{prop: omitting product axiom} we observe that the polynomial ring $U:=\ZZ[X_1, X_2, \ldots; Y_1, Y_2, \ldots]$  in the infinitely many variables $X_1, X_2, \ldots$ and $Y_1, Y_2, \ldots$ can be equipped with a unique $\lambda$-ring structure such that $\lambda^k(X_1) = X_k$ and $\lambda^k(Y_1) = Y_k$. It is called the free-$\lambda$-ring in $2$ variables (but this universal property is not used here) and can be constructed similarly to the free $\lambda$-ring in $1$ variable which has been introduced in \cite[I, \S 2]{AT69}. In particular, we have $\lambda^k(X_1Y_1) = P_k$ by definition of a $\lambda$-ring and $\lambda^k(X_1Y_1) = Q_k$ by (\ref{eq: product axiom with Q}) applied to $X_1, Y_1 \in U$; thus $Q_k= P_k$ in $U$.  

Together with (\ref{eq: product axiom with Q}), this proves the desired product axiom formula (\ref{eq: product axiom}) for $K$. 
\end{proof}

Now, let $f \colon X\rightarrow Y$ be a morphism between quasi-compact schemes and let $F := f^* \colon \cP(Y) \rightarrow \cP(X)$ denote the induced pull-back functor between the associated categories of locally free $\cO_X$- and $\cO_Y$-modules of finite rank. By \cref{lem: power operations on B[F]}, we have exterior power operations $\lambda^k \colon K_0B[F] \rightarrow K_0B[F]$ which turn $K_0B[F]$ into a pre-$\lambda$-ring.

\begin{theorem}\label{thm: K_0B[F] is a lambda ring} 
The pre-$\lambda$-ring $K_0B[F]$ is a $\lambda$-ring. 
\end{theorem}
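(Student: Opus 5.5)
By \cref{prop: omitting product axiom}, it suffices to verify the composition axiom (\ref{eq: composition axiom}) for $K_0B[F]$. The pre-$\lambda$-ring structure is already given by \cref{lem: power operations on B[F]}. We follow the strategy of \cite{HKT17}. Let $\mathrm{Pol}^0_{<\infty}(\ZZ)$ denote the exact category of homogeneous polynomial functors of finite degree on finitely generated free $\ZZ$-modules. Its Grothendieck group is the free $\lambda$-ring in one generator $[\mathrm{id}]$, with $\lambda^k[\mathrm{id}] = [\Lambda^k]$. In particular, in this ring the class of the composite $\Lambda^k\circ\Lambda^l$ equals $P_{k,l}([\Lambda^1],\ldots,[\Lambda^{kl}])$, where products are given by pointwise tensor products of functors.

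The key step is to construct, for each object $X$ of $B[F]$, an exact functor
\[\mathrm{ev}_X \colon \mathrm{Pol}^0_{<\infty}(\ZZ) \rightarrow B[F], \qquad P \mapsto P(X).\]
Here $P(X)$ is defined componentwise: first apply $P$ levelwise to the simplicial objects $\Gamma M$ and $\Gamma N$ (each differential of the binary complex separately), then apply $N$, and set $P(u) := N(P(\Gamma u))$. A polynomial functor over $\ZZ$ extends to locally free $\cO$-modules by base change, compatibly with $f^*$. This is the reason for restricting to the scheme situation $F = f^*$. Boundedness of the resulting complexes follows as in \cite[Lemma~5.6]{HKT17}.

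Next I would verify the required properties of $\mathrm{ev}_X$:
\begin{itemize}
\item[(i)] $P(u)$ is again a quasi-isomorphism. This follows from the Dold--Kan/Dold--Puppe fact that levelwise polynomial functors preserve simplicial homotopy equivalences, as used for $\Lambda^k$ in \cite[Proposition~1.9]{KZ25}.
\item[(ii)] $\mathrm{ev}_X$ is exact. Short exact sequences of polynomial functors split objectwise on free modules, so they give levelwise split sequences.
\item[(iii)] $\mathrm{ev}_X$ is compatible with the ring structures: $(P\otimes Q)(X) \cong P(X)\otimes_\Delta Q(X)$, and $\mathrm{id}(X) = X$. Both hold because $\otimes_\Delta$ is defined via diagonals of levelwise tensor products.
\item[(iv)] $\mathrm{ev}_X$ is compatible with composition and exterior powers: $(P\circ Q)(X) \cong P(Q(X))$, using $\Gamma N \cong \mathrm{id}$ on simplicial objects, and $\Lambda^k(X)$ agrees with $\mathrm{ev}_X(\Lambda^k)$ by \cref{def: power operations for triples}.
\end{itemize}
Hence $\mathrm{ev}_X$ induces a ring homomorphism $K_0\mathrm{Pol}^0_{<\infty}(\ZZ) \to K_0B[F]$ sending $[\Lambda^k]\mapsto\lambda^k[X]$ and $[\Lambda^k\circ\Lambda^l]\mapsto\lambda^k\lambda^l[X]$. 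Applying it to the identity $[\Lambda^k\circ\Lambda^l] = P_{k,l}([\Lambda^1],\ldots,[\Lambda^{kl}])$ gives the composition axiom for classes of single objects $X$.

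To pass to arbitrary elements $x = [X]-[Y]$, I would use the standard fact that, in a pre-$\lambda$-ring, the set of elements satisfying the composition axiom for all $k,l$ is closed under sums and negatives. This holds because $P_{k,l}$ is compatible with the sum formula in the universal ring. For sums one could alternatively note $[X]+[Y]=[X\oplus Y]$, and handle differences via the universal argument with the free $\lambda$-ring in two variables, exactly as in the proof of \cref{prop: omitting product axiom}.

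I expect the main obstacle to be step (iv) together with exactness and quasi-isomorphism invariance. One must carefully check that the identification $P(Q(X))\cong (P\circ Q)(X)$ is natural and compatible with the quasi-isomorphism components $u$, so that it holds as an isomorphism in $B[F]$ rather than only up to homotopy. One must also check that the isomorphism $\Gamma N\cong\mathrm{id}$ is applied compatibly to both differentials of the binary complex.
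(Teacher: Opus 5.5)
Your strategy is the paper's. You reduce to the composition axiom via \cref{prop: omitting product axiom}, then transport the identity $[\Lambda^k\circ\Lambda^l]=P_{k,l}([\Lambda^1],\ldots,[\Lambda^{kl}])$ from $K_0\mathrm{Pol}^0_{<\infty}(\ZZ)$ along the functor built from $N\circ H\circ\Gamma$ after base change. Your $\mathrm{ev}_X$ is the paper's functor $\mathrm{Pol}^0_{<\infty}(\ZZ)\to\mathrm{End}(B[F])$ followed by evaluation at $X$. (Minor point: $\mathrm{Pol}^0_{<\infty}(\ZZ)$ consists of all finite-degree polynomial functors with $H(0)=0$, not just homogeneous ones.)

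The step whose justification fails is the passage from classes $[X]$ to arbitrary $x=[X]-[Y]$; the paper states this reduction without proof. The ``standard fact'' you invoke is false in a general pre-$\lambda$-ring. The universal identity relating $P_{k,l}(\lambda^\bullet(x+y))$ to $x$ and $y$ involves the product polynomials $P_m$ evaluated at $\lambda^\bullet(\lambda^{l-i}x)$ and $\lambda^\bullet(\lambda^i y)$. To compare it with $\lambda^k\lambda^l(x+y)$ expanded by the sum axiom, you need the product axiom, which you have not established. For a counterexample, write $\lambda_t(x)=1+\sum_{k\ge1}\lambda^k(x)t^k$ and let $K=u\ZZ[u]$ with $\lambda_t(u)=1+ut$, $\lambda_t(u^2)=1+u^2t+u^4t^2$ and $\lambda_t(u^n)=1+u^nt$ for $n\ge3$, extended additively. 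Then $u$ satisfies the composition axiom for all $k,l$. However, $\lambda^2\lambda^2(2u)=\lambda^2(u^2)=u^4\neq0=P_{2,2}(2u,u^2,0,0)$, since $P_{2,2}=X_1X_3-X_4$. Your fallback (``as in the proof of \cref{prop: omitting product axiom}'') does not supply the missing input either, because the universal argument there only identifies $Q_k$ with $P_k$.

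The step can be repaired. Let $S$ be the set of classes of objects; it generates $K_0B[F]$ and is closed under sums, products and all $\lambda^k$.
\begin{enumerate}[(1)]
\item The induction in the proof of \cref{prop: omitting product axiom}, run for fixed $x,y\in S$, only uses the composition axiom for $x$, $y$ and $x+y=[X\oplus Y]$. So your evaluation argument yields the product axiom on $S$.
\item For fixed $y\in S$, the maps $x\mapsto\lambda_t(xy)$ and $x\mapsto1+\sum_kP_k(\lambda^\bullet x;\lambda^\bullet y)t^k$ are both homomorphisms from $(K_0B[F],+)$ to the multiplicative group $1+tK_0B[F][[t]]$. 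For the second map, this follows from an integral polynomial identity in the free $\lambda$-ring together with the sum axiom. The two maps agree on $S$, hence everywhere. Swapping the roles of $x$ and $y$ gives the product axiom for all pairs.
\item By induction on $l$, the map $x\mapsto\lambda_t(\lambda^lx)\bigl(1+\sum_kP_{k,l}(\lambda^\bullet x)t^k\bigr)^{-1}$ is a homomorphism. To see this, expand $\lambda^l(x+y)$ by the sum axiom and treat the cross terms $\lambda^{l-i}x\,\lambda^iy$, $0<i<l$, with (2) and the inductive hypothesis. This map is trivial on $S$, hence trivial, which is the composition axiom on all of $K_0B[F]$.
\end{enumerate}
Alternatively, you could evaluate bivariate polynomial functors at $(X,Y)$, whose image contains $[X]-[Y]$. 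That route, however, requires a two-variable analogue of \cite[Corollary~8.6]{HKT17}.
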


\begin{proof}
Let $k,l \ge 1$. According to \cref{prop: omitting product axiom}, it suffices to show that 
\[ \lambda^k(\lambda^l(x)) = P_{k, l}(\lambda^1(x), \lambda^2(x), ..., \lambda^{kl}(x))\]
for all $x \in K_0B[F]$. To this end, we follow the approach used in \cite[Section~8]{HKT17}. We first note that it suffices to prove this formula when $x$ is the class of a triple $V \in B[F]$. Due to a standard argument, the category $\mathrm{End}(B[F])$ of endo-functors on $B[F]$ is an exact category with tensor product and exterior powers $\Lambda^k$  given by $H \mapsto \Lambda^k \circ H$. We will show that the stronger identity
    \[ [\Lambda^k \circ \Lambda^l] = P_{k, l}([\Lambda^1], ..., [\Lambda^{kl}]) \]
    holds in $K_0 \mathrm{End}(B[F])$. The desired identity in $K_0B[F]$ then follows by applying the homomorphism
    \[ K_0 \mathrm{End}(B[F]) \to K_0 B[F] \text{ given by } [H] \mapsto [H(V)]. \]
    By \cite[Corollary~8.6]{HKT17}, the stronger identity holds in the Grothendieck group $K_0 \mathrm{Pol}^0_{< \infty}(\mathbb{Z})$ of the category of \emph{polynomial} endo-functors on $\cP(\ZZ) := \cP(\mathrm{Spec}(\ZZ))$ of finite degree which fix $0$. It therefore suffices to construct an exact functor
    \begin{equation}\label{eq: functor from Pol}
    \mathrm{Pol}^0_{<\infty}(\ZZ) \rightarrow \mathrm{End}\left(B[F]\right)
    \end{equation}
    which respects composition, tensor products and exterior powers and sends the identity functor to the identity functor. To this end, recall from \cite[Section~8A]{HKT17} that a polynomial functor $H$ over a scheme $Z$ assigns a $H(\cF)\in \cP(Z)$ to any object~$\cF \in \cP(Z)$ and a morphism $H(\alpha)\colon H(\cF)_T \rightarrow H(\cG)_T$ to any morphism $\alpha \colon \cF_T \rightarrow \cG_T$ for any $\cF, \cG \in \cP(Z)$ and for any $Z$-scheme~$T$; the latter assignments must commute with pullback $\varphi^*\colon \mathrm{Hom}(\cF_T, \cG_T) \rightarrow \mathrm{Hom}(\cF_{T'}, \cG_{T'})$ for any morphism $\varphi \colon T' \rightarrow T$ of $Z$-schemes, but need not be linear. Furthermore, for any morphism $g \colon Z' \rightarrow Z$ of schemes, there is a base change functor $\mathrm{Pol}_{< \infty}^0(Z) \rightarrow \mathrm{Pol}_{< \infty}^0(Z')$, see the construction in \emph{loc.\ cit.} The base change of any $H \in \mathrm{Pol}_{<\infty}^0(Z)$ will be denoted $H$ again and satisfies $H(g^*(\cF)) \cong g^*(H(\cF))$ by construction.  Now, given $H \in \mathrm{Pol}_{<\infty}^0(\ZZ)$ and $V =(M.,N.,u) \in B[F]$, we base-change $H$ from $\mathrm{Spec}(\ZZ)$ to $X$ and~$Y$ and define $H(V) := (N H\Gamma M., N H\Gamma N., N H\Gamma u)$, similarly to \cref{Sec: 2} and \cref{Sec: 3}. Note that $N H\Gamma(u)$ is a quasi-isomorphism again; this is proved in \cite[Proposition 1.9]{KZ25} when $H$ is an exterior power functor but the proof there works for any functor $H$ which fixes $0$. By construction, the resulting functor (\ref{eq: functor from Pol}) satisfies the properties listed above. In particular, this finishes the proof.     
\end{proof}

We remark that we similarly obtain that $K_0C[F]$ is a $\lambda$-ring. Furthermore, it immediately follows that $K_0pB[F]$ and $K_0pC[F]$ are $\lambda$-rings, and we obtain:

\begin{corollary}\label{cor: K_0[F] is a lambda ring}
The products and exterior power operations introduced in \cref{def: products on K_0[F]} and \cref{def: exterior powers on relative K_0} turn the relative Grothendieck group $K_0[F]$ into a $\lambda$-ring (without unity). 
\end{corollary}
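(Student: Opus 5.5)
The plan is to transport the $\lambda$-ring structure from $K_0pB[F]$ to $K_0[F]$ via the identification (\ref{Equ: identification}). By \cref{thm: K_0B[F] is a lambda ring} and the remark following it, $R:=K_0pB[F]$ is a $\lambda$-ring (with unity), and $K_0pC[F]$ is a $\lambda$-ring as well. The functors $\bot\colon B[F]\to C[F]$ and $\Delta\colon C[F]\to B[F]$ are exact. They commute with simplicial tensor products and with the exterior powers of \cref{def: power operations for triples}, since both constructions are applied componentwise, and they preserve the subcategories $pB[F]$ and $pC[F]$. Hence $\bot$ and $\Delta$ induce $\lambda$-ring homomorphisms, and so does $e:=\Delta\circ\bot\colon R\to R$, which is an idempotent $\lambda$-ring endomorphism. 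Let $I:=\ker(\bot)=\ker(e)$. Then $I$ is an ideal of $R$, and it is closed under every $\lambda^k$, because $e(\lambda^k(z))=\lambda^k(e(z))=\lambda^k(0)=0$ for $z\in I$. Here $\lambda^k(0)=0$ for $k\ge1$ follows from the sum axiom (\ref{eq: sum axiom}) by induction on $k$.

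Next I would check that the product and power operations on $K_0[F]$ are exactly those of $I$, transported along the bijection (\ref{Equ: identification}). Its inverse sends the class of $\tilde{x}\in R$ to $\tilde{x}-e(\tilde{x})\in I$. So \cref{def: products on K_0[F]} and \cref{def: exterior powers on relative K_0} read: take the unique representatives in $I$, multiply them or apply $\lambda^k$ in $R$, and then take the class again. Since $I$ is stable under both operations, the result is again the canonical representative. In particular the definitions do not depend on the choice of representatives.

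It then suffices to show that $I$, with the restricted product and $\lambda^k$ for $k\ge1$, is a $\lambda$-ring without unity in the sense of Section~4. Commutativity and associativity of the product are inherited from $R$, and $\lambda^1(x)=x$ holds trivially. For the sum axiom (\ref{eq: sum axiom}) in $R$, the $i=0$ and $i=k$ terms involve $\lambda^0=1$. These match the paper's convention that $\lambda^0(x)\lambda^k(y)$ means $\lambda^k(y)$, so every term lies in $I$ and the axiom holds there. For the composition axiom (\ref{eq: composition axiom}), the polynomial $P_{k,l}$ evaluated at $\lambda^1(x),\dots,\lambda^{kl}(x)$ is computed in $R$, and the identity holds in $R$, so it holds in $I$. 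By \cref{prop: omitting product axiom}, which uses only the sum and composition axioms, $I$ is then a $\lambda$-ring. Alternatively, the product axiom can also be read off directly from $R$.

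The main obstacle is the unit issue. I must make sure that the universal polynomials $P_{k,l}$, and also $P_k$, have no constant term, so that they make sense in a non-unital ring. They should also be the same polynomials whether the ring is regarded as unital or not. Their absence of constant terms is standard: they are homogeneous of positive weight in the appropriate grading. I would also verify in \cref{prop: omitting product axiom} that the constructed $Q_k$ use no unit, which holds because the $P_{j,2}$ and the sums appearing there have no constant terms.
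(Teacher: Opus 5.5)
Your proposal is correct and follows essentially the same route as the paper. The paper also notes that, since $K_0pB[F]$ is a $\lambda$-ring, the sum, product and composition axioms hold in the subring $\ker(\bot\colon K_0pB[F]\to K_0pC[F])$, and then transports them along the identification (\ref{Equ: identification}), which is compatible with products and $\lambda^k$ by construction. Your additional checks make explicit what the paper leaves implicit: stability of this kernel under $\lambda^k$, the explicit inverse $\tilde x\mapsto\tilde x-\Delta\bot\tilde x$ matching the definitions, and the absence of constant terms in $P_k$ and $P_{k,l}$ in the non-unital setting.
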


\begin{proof} By \cref{thm: K_0B[F] is a lambda ring}, the axioms (\ref{eq: sum axiom}), (\ref{eq: product axiom}) and (\ref{eq: composition axiom}) hold in the subring $\mathrm{ker}(\bot \colon K_0pB[F] \rightarrow K_0pC[F])$  of $K_0B[F]$. As the isomorphism (\ref{Equ: identification}) is by construction compatible with products and exterior power operations, they then also hold in $K_0[F]$.
\end{proof}

\section{Exterior Power Operations on Higher Relative \texorpdfstring{$K$}{K}-Groups}\label{sec: higher relative K-groups}

In this section, we first recall Grayson's combinatorial definitions of higher $K$-groups \cite{Grayson2012} and of higher relative $K$-groups \cite{Grayson2016}. The recursive nature of these definitions then allows us to prove the main result of this paper, namely that higher relative $K$-groups can be equipped with natural exterior power operations which satisfy the axioms of a $\lambda$-ring and which are compatible with the incoming and outgoing maps in the long exact sequence of $K$-groups. 

Let $\cM$ be an exact category and $n \ge 0$. By \cite[Sections~2 and~3]{Grayson2012}, the categories $C\cM$, $B\cM$, $C^\mathrm{q}\cM$ and $B^\mathrm{q}\cM$ are again exact categories (which support long exact sequences) and so we can iterate these constructions. As in \cite[Section~7]{Grayson2012}, we form the $n$-cube $\Omega^n \cM$ of exact categories as follows. At the corner of $\Omega^n \cM$ corresponding to $(i_1, \ldots, i_n) \in \{0,1\}^n$  we put the exact category $G_{i_1} \ldots G_{i_n}\cM$ where $G_i := C^\mathrm{q}$ if $i=0$ and $G_i := B^\mathrm{q}$ if $i=1$. Furthermore, we put the obvious diagonal functor $\Delta$ at each edge of $\Omega^n \cM$. For example, the $0$-cube~$\Omega^0 \cM$ is just $\cM$, the $1$-cube $\Omega^1\cM$ is $C^\mathrm{q}\cM \overset{\Delta}{\longrightarrow} B^\mathrm{q}\cM$ and the $2$-cube~$\Omega^2 \cM$ is the square 
\[\begin{tikzcd}
            C^{\mathrm{q}} C^{\mathrm{q}} \cM \arrow[r, "\Delta"] \arrow[d, "\Delta"]
            & C^{\mathrm{q}} B^{\mathrm{q}} \cM \arrow[d, "\Delta"]\\
            B^{\mathrm{q}} C^{\mathrm{q}} \cM \arrow[r, "\Delta"]
            & B^{\mathrm{q}} B^{\mathrm{q}} \cM.
    \end{tikzcd}\]
We now consider all those functors $\Delta$ in $\Omega^n\cM$ which arrive at $(B^\mathrm{q})^n \cM$ and define $\cK_0\Omega^n \cM$ to be the factor group of $K_0(B^\mathrm{q})^n\cM$ 
modulo the subgroup generated by the images of all the homomorphisms $K_0 \Delta$ induced by these functors $\Delta$. In other words, $\cK_0\Omega^n \cM$ is obtained by taking iterated cokernels (in any order) in the cube of abelian groups obtained from $\Omega^n\cM$ by applying~$K_0$. By \cite[Corollary~7.2]{Grayson2012}  we then have a canonical isomorphism
\begin{equation}
K_n \cM \cong \cK_0\Omega^n\cM
\end{equation}
between Quillen's $n$th higher $K$-group $K_n\cM$  and $\cK_0\Omega^n\cM$. We will take this as the definition of $K_n \cM$.

Now, let $F \colon \cM \rightarrow \cN$ be an exact functor between exact categories. We form the $(n+1)$-cube 
\[\Omega^n[F] := \left(\Omega^n \cM \overset{F}{\longrightarrow} \Omega^n \cN\right).\] For example, for $n=2$, this looks as follows: 
\begin{equation*}
        \begin{tikzcd}
             C^{\mathrm{q}}C^{\mathrm{q}}\mathcal{M} \arrow[rr, "C^{\mathrm{q}}C^{\mathrm{q}}F"] \arrow[dr, "\Delta"] \arrow[dd, "\Delta"]
            & & C^{\mathrm{q}}C^{\mathrm{q}}\mathcal{N}  \arrow[dd, "\Delta", near end] \arrow[dr, "\Delta"] & \\
            &  C^{\mathrm{q}}B^{\mathrm{q}}\mathcal{M} \arrow[rr, crossing over, "C^{\mathrm{q}}B^{\mathrm{q}}F", near start]
            & & C^{\mathrm{q}}B^{\mathrm{q}}\mathcal{N}  \arrow[dd, "\Delta"]\\
             B^{\mathrm{q}}C^{\mathrm{q}}\mathcal{M} \arrow[rr, "B^{\mathrm{q}}C^{\mathrm{q}}F", near start] \arrow[dr, "\Delta"] & & B^{\mathrm{q}}C^{\mathrm{q}}\mathcal{N}  \arrow[dr, "\Delta"] &\\
            & B^{\mathrm{q}}B^{\mathrm{q}}\mathcal{M} \arrow[rr, "B^{\mathrm{q}}B^{\mathrm{q}}F"] \arrow[from=uu, crossing over, "\Delta", near end] & & B^{\mathrm{q}}B^{\mathrm{q}}\mathcal{N} 
        \end{tikzcd}
\end{equation*}

\begin{definition}[{\cite[Section~2]{Grayson2016}}]\label{def: higher relative K-groups}
The $n$th higher relative $K$-group $K_n[F]$ is defined as the iterated cokernel of the $n$-cube $K_0\Omega^n[F]$ of abelian groups obtained from $\Omega^n[F]$ by forming the relative $K_0$-group of each of those functors in~$\Omega^n[F]$ which are induced by $F$. 
\end{definition} 
\noindent For example, $K_2[F]$ is defined as the iterated cokernel of the square 
    \begin{equation*}
        \begin{tikzcd}
            K_0 [C^\mathrm{q} C^\mathrm{q} F] \arrow[r, "\Delta"] \arrow[d, "\Delta"]
            & K_0 [C^\mathrm{q} B^\mathrm{q} F] \arrow[d, "\Delta"]\\
            K_0 [B^\mathrm{q} C^\mathrm{q} F] \arrow[r, "\Delta"]
            & K_0 [B^\mathrm{q} B^\mathrm{q} F].
        \end{tikzcd}
    \end{equation*}

\begin{remark}\label{rem: Higher Relative K group justification}
The following two facts justify calling $K_n[F]$ the $n$th higher relative $K$-group.
\begin{enumerate}[(a)]
\item By \cite[Corollary~2.3]{Grayson2016}, there is a purely algebraically defined long exact sequence
    \begin{equation*}
        \begin{tikzcd}
            \cdots \arrow[r]  
            & K_{n+1} \M \arrow[r] \ar[draw=none]{d}[name=X, anchor=center]{}
            & K_{n+1} \N \ar[rounded corners,
                    to path={ -- ([xshift=2ex]\tikztostart.east)
                              |- (X.center) \tikztonodes
                              -| ([xshift=-2ex]\tikztotarget.west)
                              -- (\tikztotarget)}]{dll}[at end]{} \\
            K_n[F] \arrow[r] 
            & K_n \M \arrow[r] \ar[draw=none]{d}[name=Y, anchor=center]{}
            & K_n \N \ar[rounded corners,
                    to path={ -- ([xshift=2ex]\tikztostart.east)
                              |- (Y.center) \tikztonodes
                              -| ([xshift=-2ex]\tikztotarget.west)
                              -- (\tikztotarget)}]{dll}[at end]{} \\
            \cdots \arrow[r]
            & K_0 \M \arrow[r]
            & K_0 \N.
         \end{tikzcd}
    \end{equation*}
\item By \cite[Theorem~4.3]{Tu24}, there is a canonical isomorphism
\[K_n[F] \cong \pi_n\mathrm{hofib}(KF \colon K\cM \rightarrow K\cN)\]
between $K_n[F]$ and the $n$th homotopy group of the homotopy fibre of the induced map $KF$ between the $K$-theory spaces $K\cM$ and  $K\cN$ of $\cM$ and $\cN$.
\end{enumerate}
\end{remark}

We can also describe $K_n[F]$ in the following ways. First, we turn the $(n+1)$-cube $\Omega^n[F]$ into another $(n+1)$-cube of exact categories by replacing every edge in $\Omega^n[F]$ of the form $G_1 \ldots G_n \cM \overset{G_1 \ldots G_n F}{\longrightarrow} G_1 \ldots G_n \cN$ with $C[G_1 \ldots G_n F] \overset{\Delta}{\longrightarrow} B[G_1 \ldots G_n F]$. We tacitly assume that the notations $C[G_1 \ldots G_n F]$ and $B[G_1 \ldots G_n F]$ here include the subcategories $pC[G_1 \ldots G_n F]$ and $pB[G_1 \ldots G_n F]$, respectively, as introduced in \cref{def: triples and relative K_0}(a) and (b).  We use the ad hoc notation $\mathrm{Cube}(F,\Delta, n)$ for this cube. Then $K_n[F]$ is the iterated cokernel of the $(n+1)$-cube $K_0p\mathrm{Cube}(F,\Delta,n)$ of abelian groups obtained from $\mathrm{Cube}(F,\Delta,n)$ by applying the functor~$K_0p$. If we additionally replace all homomorphism $K_0p\Delta$  with $K_0p\bot$, then, similarly to (\ref{Equ: identification}), the iterated kernel of the resulting $(n+1)$-cube $K_0p\mathrm{Cube}(F,\bot,n)$ is  canonically isomorphic to $K_n[F]$. For example, the iterated kernel of the cube of abelian groups obtained by applying $K_0p$ to the cube 
\begin{equation*}
        \begin{tikzcd}[arrows=<-]
             C[C^{\mathrm{q}}C^{\mathrm{q}}F]\arrow[rr, "\bot"] \arrow[dr, "\bot"] \arrow[dd, "\bot"]
            & & B[C^{\mathrm{q}}C^{\mathrm{q}}F] \arrow[dd, "\bot", near end] \arrow[dr, "\bot"] & \\
            &  C[C^{\mathrm{q}}B^{\mathrm{q}}F] \arrow[rr, crossing over, "\bot", near start]
            & & B[C^{\mathrm{q}}B^{\mathrm{q}}F] \arrow[dd, "\bot"]\\
             C[B^{\mathrm{q}}C^{\mathrm{q}}F] \arrow[rr, "\bot", near start] \arrow[dr, "\bot"] & & B[B^{\mathrm{q}}C^{\mathrm{q}}F]  \arrow[dr, "\bot"] &\\
            & C[B^{\mathrm{q}}B^{\mathrm{q}}F]\arrow[rr, "\bot"] \arrow[from=uu, crossing over, "\bot", near end] & & B[B^{\mathrm{q}}B^{\mathrm{q}}F]
        \end{tikzcd}
\end{equation*}
is isomorphic to $K_2[F]$.

Now, as in the previous section, let $f \colon X\rightarrow Y$ be a morphism between quasi-compact schemes, let $\cM := \cP(Y)$, $\cN := \cP(X)$ and let $F := f^*$ denote the induced pull-back functor from $\cM$ to $\cN$. The standard assembly of power operations on $\cP(Y)$ and $\cP(X)$ induce an assembly of power operations on $G_1 \ldots G_n\cM$ and $G_1 \ldots G_n\cN$ for any $G_1, \ldots, G_n \in \{C^\mathrm{q},B^\mathrm{q}\}$ (see \cite[Section~1]{KZ25}) and then an assembly of power operations on $C[G_1 \ldots G_n F]$ and $B[G_1 \ldots G_n]$ (see \cref{lem: power operations on B[F]}) and finally products and exterior power operations $\lambda^k$, $k \ge 1$, on $K_0[G_1 \ldots G_n F]$ and $K_0p[G_1 \ldots G_nF]$ which turn these Grothendieck groups into pre-$\lambda$-rings. As these pre-$\lambda$-ring structures are compatible with all homomorphisms in the cube $K_0p\mathrm{Cube}(F,\bot,n)$, we also obtain a pre-$\lambda$-ring structure on the iterated kernel of this cube. 

The following definition extends \cref{def: products on K_0[F]} and \cref{def: exterior powers on relative K_0} from $n=0$ to any $n \ge 0$. 

\begin{definition}\label{def: lambda-ring structure on higher relative K-groups}
The pre-$\lambda$-ring structure on $K_n[F]$ is defined to be the one obtained by transporting the pre-$\lambda$-ring structure on the iterated kernel of $K_0p\mathrm{Cube}(F,\bot,n)$ to $K_n[F]$ via the canonical isomorphism described above. 
\end{definition}

\begin{theorem}\label{thm: main theorem}\mbox{}
\begin{enumerate}[(a)] 
\item The pre-$\lambda$-ring $K_n[F]$ is a $\lambda$-ring. 
\item If $n\ge 1$, all products on $K_n[F]$ vanish and the exterior power operations~$\lambda^k$, $k \ge 1$, on $K_n[F]$ are homomorphisms. 
\item Products and exterior power operations on $K_n[F]$ are compatible with products and exterior power operations on the higher $K$-groups of $X$ and $Y$ via the long exact sequence mentioned in \cref{rem: Higher Relative K group justification}(a). 
\end{enumerate}
\end{theorem}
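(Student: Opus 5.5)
For (a), I will transfer the $\lambda$-ring axioms from the $K_0$-level, as in \cref{cor: K_0[F] is a lambda ring}. Each corner of $\mathrm{Cube}(F,\bot,n)$ is of the form $C[G_1\ldots G_nF]$ or $B[G_1\ldots G_nF]$. The argument of \cref{thm: K_0B[F] is a lambda ring} goes through verbatim for these categories. Its key input is an exact functor $\mathrm{Pol}^0_{<\infty}(\ZZ)\to\mathrm{End}(B[G_1\ldots G_nF])$, given by $H\mapsto(V\mapsto (NH\Gamma M, NH\Gamma N, NH\Gamma u))$ applied componentwise through the iterated (binary) complex structure. Here I will use that the power operations on $G_1\ldots G_n\cM$ are themselves obtained by iterating $N(-)\Gamma$ as in \cite[Section~1]{KZ25}, and that base-changed polynomial functors fixing $0$ preserve quasi-isomorphisms. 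Together with \cref{prop: omitting product axiom}, this shows that every $K_0p$ at every corner is a $\lambda$-ring. The maps $K_0p\bot$ are $\lambda$-ring homomorphisms, since $\bot$ commutes with $\otimes_\Delta$ and $\Lambda^k$. Hence the iterated kernel is a $\lambda$-subring: it is closed under products and the $\lambda^k$, and the polynomial identities hold because they hold in the ambient ring. Transporting along the canonical isomorphism then gives (a).

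For (b), the point, as the introduction stresses, is that the iterated kernel (or cokernel) may be computed in any order. Assume $n\ge1$. I will first take kernels in one of the $n$ "absolute" directions, i.e. with respect to $\bot\colon B^\mathrm{q}\to C^\mathrm{q}$ in the outermost $G_1$ slot. This identifies the iterated kernel with a subgroup of the corresponding iterated kernel for the functor $G_2\ldots G_nF$, taken over triples whose components carry an extra binary structure in an acyclic direction. The vanishing of products then reduces to the analogue of \cite[Proposition~5.11]{HKT17}: simplicial tensor products of two binary acyclic complexes are trivial in $K_1$. Concretely, I expect to show that for $x,y$ in the kernel of $\bot$ in the $G_1$-direction, the product $(x-\Delta\bot x)(y-\Delta\bot y)$ lies in the image of the $\Delta$ maps. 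I would argue this by applying the proof of \cite[Proposition~5.11]{HKT17} one layer deeper, regarding triples in $B[B^\mathrm{q}G_2\ldots G_nF]$ as binary acyclic complexes in the exact category $B[G_2\ldots G_nF]$ (or its $C$-version). Once products vanish, the sum axiom (\ref{eq: sum axiom}) gives $\lambda^k(x+y)=\lambda^k x+\lambda^k y$, so each $\lambda^k$ is additive. This identification is the main obstacle. Exchanging the order of the relative and absolute directions requires checking that a triple in $B[B^\mathrm{q}\cdots F]$ genuinely is a binary acyclic complex of triples, and that the vanishing argument of \cite{HKT17}, which goes through an explicit homotopy/filtration on $\Gamma$, is compatible with the quasi-isomorphism components $u$.

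For (c), I will check each map of the long exact sequence of \cref{rem: Higher Relative K group justification}(a) using the iterated-kernel descriptions. The outgoing map $K_n[F]\to K_n\cM$ is induced cornerwise by $(M,N,u)\mapsto \top M-\bot M$. Its compatibility with products and the $\lambda^k$ follows cornerwise from \cref{prop: compatibility of products} and \cref{prop: compatibility of power operations with exact sequence}, applied to $G_1\ldots G_nF$, since those arguments use only the formal structure. The incoming map $K_{n+1}\cN\to K_n[F]$ is induced by $N\mapsto(0,N,0)$, and the same two propositions apply, with $K_1$ of $G_1\ldots G_n\cN$ replaced by the relevant corner of $\Omega^{n+1}\cN$. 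Products on $K_{n+1}\cN$ are trivial by \cite{HKT17}, and this matches (b). Finally, $K_n\cM\to K_n\cN$ is induced by $F$, which respects all structure.
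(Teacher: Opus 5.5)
Your proposal is correct and follows essentially the same route as the paper in all three parts. For (a) you use the $\mathrm{Pol}^0_{<\infty}(\ZZ)$-functor together with \cref{prop: omitting product axiom}, then restrict to the iterated kernel. For (b) you regard triples with absolute binary directions as binary acyclic complexes of triples and invoke \cite[Proposition~5.11]{HKT17}; the paper swaps all $n$ layers at once and works with $K_nB[F]$, whereas you swap only the outermost layer, which already suffices. For (c) you embed the five-term sequence into the one for $(B^\mathrm{q})^nF$ and apply \cref{prop: compatibility of products} and \cref{prop: compatibility of power operations with exact sequence}.

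One simplification: once the swap is made, \cite[Proposition~5.11]{HKT17} applies as a black box to the exact category $B[G_2\ldots G_nF]$. The quasi-isomorphisms $u$ are simply part of the objects of that category, so you do not need to re-examine its proof for compatibility with them.
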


\begin{proof}\mbox{}\\
(a) As in \cref{cor: K_0[F] is a lambda ring}, the first statement follows from the stronger statement that $K_0B[(B^\mathrm{q})^nF]$ is a $\lambda$-ring. To prove the latter statement we proceed as in the proof of \cref{thm: K_0B[F] is a lambda ring}. Similarly to there, this comes down to constructing an exact functor
    \begin{equation}
    \mathrm{Pol}^0_{<\infty}(\ZZ) \rightarrow \mathrm{End}\left(B[(B^\mathrm{q})^nF]\right)
    \end{equation}
    which respects composition, tensor products and exterior powers and sends the identity functor to the identity functor. This functor in turn is obtained as in the proof of \cref{thm: K_0B[F] is a lambda ring} after first inductively defining the functors $\mathrm{Pol}^0_{<\infty}(\ZZ) \rightarrow \mathrm{End}\left((B^\mathrm{q})^n\cP(Y)\right)$ and $\mathrm{Pol}^0_{<\infty}(\ZZ) \rightarrow \mathrm{End}\left((B^\mathrm{q})^n\cP(X)]\right)$ as in the proof of \cite[Theorem~8.18]{HKT17}. \\
(b) Due to axiom (\ref{eq: sum axiom}), we only need to show the first statement. By definition this means we need to show that products on the iterated kernel of the cube $K_0p\mathrm{Cube}(F,\bot,n)$ vanish. This iterated kernel in turn is a quotient of the iterated kernel of $K_0\mathrm{Cube}(F, \bot, n)$, and the latter kernel carries products similar to and hence compatible with the ones on the former. Now, products on higher $K$-groups such as $K_n B[F]$ vanish by \cite[Proposition~5.11]{HKT17}, hence also the induced products on $\mathrm{ker}(\bot \colon K_nB[F] \rightarrow K_nC[F])$. The latter is the same as the iterated kernel of $K_0\mathrm{Cube}(F, \bot, n)$ because we may think of each category $G[G_1 \ldots, G_n F]$ in $\mathrm{Cube}(F, \bot, n)$  as $G_1 \ldots G_n (G[F])$ and because forming iterated kernels does not depend on the order of iterations. Overall, we conclude that products on $K_n[F]$ vanish as claimed.   \\ 
(c) We associate the exact sequence (\ref{equ: long exact sequence}) with each of those arrows in the $(n+1)$-cube $\Omega^n[F]$ which are induced by $F$. This way we obtain the exact sequence of $n$-cubes of abelian groups 
\[K_1\Omega^n \cM \rightarrow K_1 \Omega^n \cN \rightarrow K_0\Omega^n[F] \rightarrow K_0\Omega^n\cM \rightarrow K_0 \Omega^n \cN.\]
Now we replace every homomorphism induced by $\Delta$ in every cube of this sequence with the homomorphism induced by $\bot$ in the opposite direction. Similar to the proof of part (b), and as explained in \cite[Lemma~2.2 and proof of Corollary 2.3]{Grayson2016}, the sequence obtained by taking iterated kernels of each cube in the resulting exact sequence of $n$-cubes is then the same as the part 
\begin{equation}\label{eq: 5-term exact sequence}
K_{n+1}\cM \rightarrow K_{n+1}\cN \rightarrow K_n[F] \rightarrow K_n\cM \rightarrow K_n\cN
\end{equation}
of the sequence in \cref{rem: Higher Relative K group justification}(a). In particular, the exact sequence (\ref{eq: 5-term exact sequence}) is a subsequence of the sequence 
\[K_1(B^\mathrm{q})^n \cM \rightarrow K_1 (B^\mathrm{q})^n \cN \rightarrow K_0 [(B^\mathrm{q})^n F] \rightarrow K_0 (B^\mathrm{q})^n\cM \rightarrow K_0 (B^\mathrm{q})^n \cN\]
and, by construction, all the products and exterior power operations are compatible with this inclusion. Now part (b) follows from \cref{prop: compatibility of products} and \cref{prop: compatibility of power operations with exact sequence}.
\end{proof}


\begin{thebibliography}{[GSVW92]}

\bibitem[ABW82]{ABW} D.\ Akin, D.\ A.\ Buchsbaum and J.\ Weyman, \emph{Schur functors and Schur complexes},
Adv.\ Math.~\textbf{44} (1982), 207--278.

\bibitem[AT69]{AT69} M.\ F.\ Atiyah and D.\ O.\ Tall, \emph{Group representations, $\lambda$-rings and the $J$-homomorphism}, Topology~\textbf{8} (1969), 253–-297.

\bibitem[Bas68]{Bas68}
Hyman Bass, \emph{Algebraic K-theory}, W.~A.~Benjamin, Inc., New York--Amsterdam, 1968.

\bibitem[BF01]{BF01} D.\ Burns and M.\ Flach,
\emph{Tamagawa numbers for motives with (non-com\-mu\-ta\-tive) coefficients},
Doc.\ Math.~\textbf{6} (2001), 501–-570.

\bibitem[dBT15]{dBT}
Niels uit de Bos and Lenny Taelman, \emph{Non-additive functors and Euler characteristics}, Homology Homotopy Appl.~{\bf 17} (2015), no.~2, 25--35. 

\bibitem[DP61]{DP61} Albrecht Dold and Dieter Puppe, \emph{Homologie nicht-additiver Funktoren. Anwendungen},  Ann.\ Inst.\ Fourier (Grenoble)~\textbf{11} (1961), 201–-312.
    
\bibitem[FL85]{FultonLang} William Fulton and Serge Lang,
\textit{Riemann-{R}och algebra},
Grundlehren der Ma\-the\-matischen Wissenschaften [Fundamental Principles of Mathematical
Sciences], vol.~277, Springer-Verlag, New York, 1985.
    
\bibitem[Gra89]{GraysonExterior}
Daniel~R.\ Grayson, \emph{Exterior power operations on higher K-theory}, K-Theory~\textbf{3} (1989), no.~3, 247–-260.

\bibitem[Gra12]{Grayson2012}
\bysame, \emph{Algebraic {$K$}-theory via binary complexes}, J.\ Amer.\ Math.\ Soc.~\textbf{25} (2012), no.~4, 1149--1167.

\bibitem[Gra16]{Grayson2016}
\bysame, \emph{Relative algebraic K-theory by elementary means} [online] (2016). Available from \url{https://arxiv.org/abs/1310.8644}.

\bibitem[Har15]{HarPhD} Tom Harris, \emph{Binary complexes and algebraic K-theory}, PhD thesis, Southamp\-ton,
2015.

\bibitem[HKT17]{HKT17}
Tom Harris, Bernhard K\"ock, and Lenny Taelman, \emph{Exterior power operations
  on higher {$K$}-groups via binary complexes}, Ann.\ K-Theory~\textbf{2}
  (2017), no.~3, 409--449.

\bibitem[K\"o00]{Koe00}
Bernhard K\"ock, \emph{Symmetric powers of Galois modules on Dedekind schemes}, Compositio Math.~{\bf 124} (2000), no.~2, 195--217.

\bibitem[K\"o01]{Koe01}
\bysame, \emph{Computing the homology of Koszul complexes}, Trans.\ Am.\ Math.\ Soc.~{\bf 353} (2001), no.~8, 3115--3147. 

\bibitem[KKW20]{KKW}
Daniel Kasprowski, Bernhard K\"ock, and Christoph Winges, \emph{$K_1$-groups via binary complexes of fixed length}, Homology Homotopy Appl.~{\bf 22} (2020), no.~1, 203--213. 

\bibitem[KZ25]{KZ25}
Bernhard K\"ock and Ferdinando Zanchetta. \emph{Comparison of exterior
power operations on higher K-theory of schemes}, Math.\ Z.~\textbf{309} (2025), no.~4, article~63. 

\bibitem[Lur17]{HigherAlg} Jacob Lurie, \emph{Higher Algebra} [online] (2017).
Available from: \url{https://www.math.ias.edu/~lurie/papers/HA.pdf}

\bibitem[Sou85]{Sou85} Christophe Soul\'e, 
\emph{Op\'erations en K-th\'eorie alg\'ebrique}, Canad.~J.~Math.~\textbf{37} (1985), no.~3, 488–-550.

\bibitem[TT90]{ThomTro}
R.\ W.\ Thomason and Thomas Trobaugh, \emph{Higher algebraic K-theory of schemes and of derived categories}, \emph{The Grothendieck Festschrift}, Vol.~III, 247–-435, Progr.\ Math., 88, Birkhäuser Boston, Boston, MA, 1990.

\bibitem[Tu25]{Tu24}
Jane Turner, \emph{Combinatorial relative algebraic K-theory} [online] (2025). Available from: \url{https://arxiv.org/abs/2506.09905}

\end{thebibliography}
\end{document}